\documentclass[11pt]{article}

\usepackage{amsmath,amssymb,amsthm}
\usepackage{aliascnt}
\usepackage{mathtools}
\usepackage{booktabs}
\usepackage{array}
\usepackage{enumitem}
\usepackage[margin=1in]{geometry}
\usepackage{microtype}
\usepackage{listings}
\usepackage{tikz}
\usetikzlibrary{arrows.meta}

\usepackage[
  backend=biber,
  style=authoryear,
  maxcitenames=2,
  maxbibnames=99,
  giveninits=true,
  uniquename=init,
  doi=true,
  isbn=true,
  url=false
]{biblatex}
\usepackage[
  colorlinks=true,
  linkcolor={blue!55!black},
  citecolor={green!45!black},
  urlcolor={blue!75!black}
]{hyperref}
\hypersetup{
  pdftitle={Sylvester's Theorem and Reduced Linear Systems: From Three-Factor Products to Small Dynamical Models},
  pdfauthor={James M. Hyman},
  pdfsubject={Lecture notes in linear algebra and numerical analysis},
  pdfkeywords={Sylvester's determinant identity, low-rank factorization, reduced linear systems,
    projection-based model reduction, singular values, lecture notes}
}
\usepackage[nameinlink,noabbrev]{cleveref}

\newcommand{\F}{\mathbb{F}}
\newcommand{\R}{\mathbb{R}}
\newcommand{\C}{\mathbb{C}}
\newcommand{\I}{\mathbf{I}}
\newcommand{\mat}[1]{\mathbf{#1}}
\newcommand{\vect}[1]{\mathbf{#1}}
\newcommand{\rank}{\operatorname{rank}}
\newcommand{\range}{\operatorname{range}}
\newcommand{\spec}{\operatorname{spec}}
\newcommand{\nullsp}{\operatorname{null}}

\newcommand{\RePart}{\operatorname{Re}}
\newcommand{\e}{\mathrm{e}}
\newcommand{\dd}{\mathrm{d}}
\newcommand{\norm}[1]{\left\lVert #1\right\rVert}

\newtheorem{theorem}{Theorem}[section]
\newaliascnt{proposition}{theorem}
\newtheorem{proposition}[proposition]{Proposition}
\aliascntresetthe{proposition}
\newaliascnt{corollary}{theorem}
\newtheorem{corollary}[corollary]{Corollary}
\aliascntresetthe{corollary}
\newaliascnt{lemma}{theorem}

\aliascntresetthe{lemma}
\theoremstyle{definition}
\newaliascnt{definition}{theorem}
\newtheorem{definition}[definition]{Definition}
\aliascntresetthe{definition}
\newaliascnt{example}{theorem}
\newtheorem{example}[example]{Example}
\aliascntresetthe{example}
\newtheorem{exercise}{Exercise}[section]
\theoremstyle{remark}
\newaliascnt{remark}{theorem}
\newtheorem{remark}[remark]{Remark}
\aliascntresetthe{remark}

\crefname{theorem}{theorem}{theorems}
\Crefname{theorem}{Theorem}{Theorems}
\crefname{proposition}{proposition}{propositions}
\Crefname{proposition}{Proposition}{Propositions}
\crefname{corollary}{corollary}{corollaries}
\Crefname{corollary}{Corollary}{Corollaries}
\crefname{lemma}{lemma}{lemmas}
\Crefname{lemma}{Lemma}{Lemmas}
\crefname{definition}{definition}{definitions}
\Crefname{definition}{Definition}{Definitions}
\crefname{example}{example}{examples}
\Crefname{example}{Example}{Examples}
\crefname{remark}{remark}{remarks}
\Crefname{remark}{Remark}{Remarks}
\crefname{equation}{equation}{equations}
\Crefname{equation}{Equation}{Equations}
\crefname{exercise}{exercise}{exercises}
\Crefname{exercise}{Exercise}{Exercises}

\title{Sylvester's Theorem and Reduced Linear Systems\\
\large From Three-Factor Products to Small Dynamical Models\\[0.6em]
\normalsize\textit{Lecture notes for an upper-level course in numerical analysis}}
\author{James M. Hyman\\
  \small Department of Mathematics, Tulane University, New Orleans, LA 70118\\
  \small \texttt{mhyman@tulane.edu}\qquad ORCID 0000-0001-5247-5794}
\date{July 29, 2026}

\begin{document}

\maketitle

\begin{abstract}
Large systems of linear equations often contain far fewer active degrees of freedom than their ambient dimension suggests. A particularly transparent instance occurs when a large matrix factors as
\[
  \mat{A}=\mat{T}\mat{S}\mat{W},
\]
where \(\mat{T}\in\F^{m\times n}\), \(\mat{S}\in\F^{n\times n}\), \(\mat{W}\in\F^{n\times m}\), and \(m\gg n\). The action of \(\mat{A}\) then passes through an \(n\)-dimensional intermediate space. Sylvester's theorem relates the nonzero eigenvalues of a product \(\mat{X}\mat{Y}\) to those of the reversed product \(\mat{Y}\mat{X}\). Applied cyclically, it shows that the nonzero spectrum of the large matrix \(\mat{A}\) is completely determined by either of the small matrices
\[
  \mat{B}=(\mat{W}\mat{T})\mat{S},
  \qquad
  \mat{C}=\mat{S}(\mat{W}\mat{T}).
\]
This article develops the theorem from first principles and explains the geometry behind the factorization. The result yields exact reduced systems, both for linear algebraic equations and for first-order linear ordinary differential equations. Worked examples carry this through in detail: spectral reduction, reconstruction of full-space eigenvectors, reduced solution of shifted systems, and exact integration of a three-dimensional ODE through a two-dimensional model. A separate section takes up singular values. Those of \(\mat{A}\), \(\mat{B}\), and \(\mat{C}\) do not agree in general, though they do when the bases are orthonormal, a case the same theorem settles once it is applied to \(\mat{A}^{*}\mat{A}\). The closing sections separate exact factorization from projection-based approximation, and flag what the theorem leaves undetermined: zero-eigenvalue structure, conditioning, and transient behavior. The material is classical. What this article adds is one continuous development, from first principles through to the limits of the theorem, pitched for an upper-level undergraduate course.
\end{abstract}

\medskip
\noindent\textbf{Keywords.} Sylvester's determinant identity; low-rank factorization; reduced linear systems; projection-based model reduction; singular values; lecture notes.

\medskip
\noindent\textbf{Mathematics Subject Classification (2020).} 15A18 (primary); 15A15, 15A23, 34A30, 65F55, 97H60 (secondary).

\tableofcontents

\section{Introduction: why a large system may be small}
\label{sec:introduction}

A matrix may be large because the state has many components, yet its interactions may depend on only a few combinations of those components. Suppose
\begin{equation}
  \mat{T}\in\F^{m\times n},
  \qquad
  \mat{S}\in\F^{n\times n},
  \qquad
  \mat{W}\in\F^{n\times m},
  \qquad
  m\gg n,
  \label{eq:dimensions}
\end{equation}
and define
\begin{equation}
  \mat{A}=\mat{T}\mat{S}\mat{W}\in\F^{m\times m}.
  \label{eq:A-factorization}
\end{equation}
The action of \(\mat{A}\) can be read from right to left:
\begin{equation}
  \F^m
  \xrightarrow{\;\mat{W}\;}
  \F^n
  \xrightarrow{\;\mat{S}\;}
  \F^n
  \xrightarrow{\;\mat{T}\;}
  \F^m.
  \label{eq:factorization-diagram}
\end{equation}
Each factor plays a distinct role: \(\mat{W}\) extracts or aggregates information from a full state, \(\mat{S}\) acts on that reduced information, and \(\mat{T}\) lifts the result back into the full space. Keeping these three roles separate is what makes the rest of the development readable. \Cref{fig:geometry} makes the consequence visible. Because every path between the two copies of \(\F^m\) runs through the narrow intermediate space, any question about \(\mat{A}\) that depends only on that passage can be settled at dimension \(n\). Identifying which questions those are is the work of the sections that follow.

\begin{figure}[!ht]
  \centering
  \begin{tikzpicture}[
    x=1mm,y=1mm,
    space/.style={draw,rounded corners=2pt,line width=0.4pt},
    sub/.style={draw,rounded corners=1.5pt,line width=0.3pt,fill=black!8},
    lbl/.style={font=\small},
    slbl/.style={font=\footnotesize},
    ar/.style={-{Stealth[length=2mm]},line width=0.5pt}
  ]
  \draw[space] (0,-13) rectangle (26,13);
  \node[slbl,anchor=north west] at (1.5,12) {$\F^m$};
  \draw[sub] (2,-11) rectangle (24,-4);
  \node[slbl] at (13,-7.5) {$\nullsp(\mat{W})$};
  \draw[space] (42,-5.5) rectangle (56,5.5);
  \node[slbl] at (49,0) {$\F^n$};
  \draw[space] (72,-5.5) rectangle (86,5.5);
  \node[slbl] at (79,0) {$\F^n$};
  \draw[space] (102,-13) rectangle (128,13);
  \node[slbl,anchor=north west] at (103.5,12) {$\F^m$};
  \draw[sub] (104,-11) rectangle (126,-4);
  \node[slbl] at (115,-7.5) {$\range(\mat{T})$};
  \draw[ar] (26,0) -- node[lbl,above] {$\mat{W}$} (42,0);
  \draw[ar] (56,0) -- node[lbl,above] {$\mat{S}$} (72,0);
  \draw[ar] (86,0) -- node[lbl,above] {$\mat{T}$} (102,0);
  \draw[ar,dashed] (13,13) -- (13,23) -- (115,23) -- (115,13.6);
  \node[lbl,above] at (64,23) {$\mat{A}=\mat{T}\mat{S}\mat{W}$};
  \end{tikzpicture}
  \caption{The factorization of \(\mat{A}\in\F^{m\times m}\) with \(m\gg n\); box size tracks dimension. Reading left to right, \(\mat{W}\) extracts, \(\mat{S}\) acts on the reduced state, and \(\mat{T}\) lifts. The dashed path is \(\mat{A}\) itself; the solid path is the route it takes. Every output lies in \(\range(\mat{T})\), and every direction in \(\nullsp(\mat{W})\) is annihilated, which is the content of \cref{eq:range-null-inclusions}. Because the solid path narrows to \(n\) dimensions, all of the nonzero spectral information in \(\mat{A}\) is already present in that small space.}
  \label{fig:geometry}
\end{figure}

This factorization immediately suggests several questions.
\begin{enumerate}[label=\arabic*.]
  \item Which eigenvalues of the large matrix \(\mat{A}\) can be found from an \(n\times n\) matrix?
  \item Which reduced variables evolve without reference to all \(m\) components of the full state?
  \item When can a large linear system be solved by a small one?
  \item How does this exact structure relate to approximate projection-based model reduction?
\end{enumerate}

The central tool is a theorem usually associated with James Joseph Sylvester. In modern form, it states that \(\mat{X}\mat{Y}\) and \(\mat{Y}\mat{X}\) have the same nonzero eigenvalues, including algebraic multiplicities, even when the two products have different dimensions. Sylvester wrote about the ``latent roots'' of reversed matrix products in 1883 \parencite{sylvester1883equation}. The determinant identity used today is often called Sylvester's determinant theorem or Sylvester's determinant identity.

The computational motivation is equally important. A dense \(m\times m\) matrix requires \(m^2\) stored entries, and standard dense eigenvalue algorithms require on the order of \(m^3\) arithmetic operations. By contrast, the factors in \cref{eq:A-factorization} require about \(2mn+n^2\) entries. Forming \(\mat{W}\mat{T}\) costs on the order of \(mn^2\), after which the principal spectral work occurs in dimension \(n\). For \(m\gg n\), this is a decisive reduction. Similar considerations motivate the broad field of reduced-order modeling \parencite{antoulas2005approximation,benner2015survey,quarteroni2016reduced}.

The distinction between two uses of the word \emph{reduction} will be maintained throughout.

\begin{definition}[Exact factor reduction]
An exact factor reduction begins with a matrix that actually satisfies
\[
  \mat{A}=\mat{T}\mat{S}\mat{W}.
\]
All identities derived from this equality are exact.
\end{definition}

\begin{definition}[Projection-based approximation]
Given a general full matrix \(\mat{L}\in\F^{m\times m}\), a projection method chooses trial and test maps and constructs a smaller matrix \(\mat{L}_r\) so that
\[
  \mat{L}\approx \mat{T}\mat{L}_r\mat{W}.
\]
The equality between the lifted reduced operator and the original operator is generally approximate.
\end{definition}

Sylvester's theorem gives exact information about \(\mat{T}\mat{L}_r\mat{W}\) and \(\mat{L}_r\). It does not, by itself, guarantee that \(\mat{L}_r\) accurately approximates the spectrum or trajectories of \(\mat{L}\). That requires additional analysis of the chosen reduced space.

\subsection*{Intended readers and prerequisites}

This article is written for an upper-level undergraduate course in numerical analysis, and for readers with comparable background. The prerequisites are one course in linear algebra covering eigenvalues, characteristic polynomials, rank, and null spaces; familiarity with solving a scalar linear first-order differential equation; and enough contact with numerical computation to care about operation counts and conditioning. Schur complements are developed in \cref{sec:preliminaries} and are not assumed.

The mathematics is classical. Standard graduate references such as \textcite{horn2013matrix} and \textcite{golub2013matrix} state Sylvester's determinant identity and its spectral consequence, usually within a few lines. Collected here instead is the surrounding development at undergraduate pace: the geometry of the factorization, worked examples small enough to check by hand, the separation of exact factor reduction from projection-based approximation, and an account of what the theorem does not settle. Exercises with selected solutions appear in \cref{sec:exercises} and \cref{app:solutions}.

\subsection*{Learning objectives}

After studying this article, a reader should be able to:
\begin{enumerate}[label=\arabic*.]
  \item state and prove Sylvester's determinant identity;
  \item derive the characteristic-polynomial relationship for rectangular products;
  \item explain why \(\mat{A}\), \(\mat{B}\), and \(\mat{C}\) have the same nonzero eigenvalues;
  \item interpret \(\mat{T}\) as a lifting map and \(\mat{W}\) as an extraction or testing map;
  \item construct exact reduced systems for algebraic equations and linear ODEs;
  \item reconstruct full-space eigenvectors and trajectories from reduced variables;
  \item distinguish exact factor reduction from approximate projection-based reduction; and
  \item identify when spectral reduction is insufficient, and explain what the theorem does not determine.
\end{enumerate}

The development runs in a single line. \Cref{sec:preliminaries} collects the rank, spectral, and Schur complement facts the proof requires. \Cref{sec:sylvester} proves Sylvester's identity for a product of two rectangular matrices. It then draws out what the identity says about characteristic polynomials, about eigenvectors, and about the eigenvalue zero, which behaves differently from the rest. \Cref{sec:three-factors} extends the result to the three-factor form and introduces the two small matrices that carry the nonzero spectrum. \Cref{sec:worked-linear-algebra} works an example by hand before any application appears, and closes by describing where factorizations of this shape come from.

\Cref{sec:singular-values} then establishes what the theorem does not give. Its position is deliberate. The projection methods recommended in \Cref{sec:projection} are built from singular values, and singular values do not transfer.

\Cref{sec:algebraic-systems} applies the exact theory to linear systems, powers, resolvents, and matrix functions. \Cref{sec:ode-reduction} and \Cref{sec:worked-ode} do the same for first-order linear differential equations, and \Cref{sec:shifted-stability} treats forcing and asymptotic stability. \Cref{sec:projection} turns from exact reduction to approximate reduction. It marks the boundary between what the algebra settles and what a modeling choice must justify. \Cref{sec:computational-workflow} treats computation and cost, and \Cref{sec:limitations} collects the cautions. Exercises, a conclusion, selected solutions, and a compact reference sheet close the article.

\section{Preliminaries}
\label{sec:preliminaries}

We work over \(\F=\R\) or \(\F=\C\). Unless otherwise stated, matrices need not be symmetric, normal, or diagonalizable.

\subsection{Range, null space, and rank}

For a matrix \(\mat{M}\), write \(\range(\mat{M})\) for its column space and \(\nullsp(\mat{M})\) for its null space. The basic rank inequalities
\begin{equation}
  \rank(\mat{X}\mat{Y})
  \leq
  \min\{\rank(\mat{X}),\rank(\mat{Y})\}
  \label{eq:rank-product}
\end{equation}
imply that the product of a tall matrix and a wide matrix can have rank no larger than the intermediate dimension.

Applied to \cref{eq:A-factorization},
\begin{equation}
  \rank(\mat{A})\leq n.
  \label{eq:rank-A}
\end{equation}
The rank bound forces at least \(m-n\) zero eigenvalues, counted with algebraic multiplicity.

Two geometric inclusions are equally immediate:
\begin{equation}
  \range(\mat{A})\subseteq \range(\mat{T}),
  \qquad
  \nullsp(\mat{W})\subseteq \nullsp(\mat{A}).
  \label{eq:range-null-inclusions}
\end{equation}
Every output of \(\mat{A}\) lies in the lifted subspace \(\range(\mat{T})\), while every direction invisible to \(\mat{W}\) is annihilated by \(\mat{A}\).

\subsection{Eigenvalues and characteristic polynomials}

For a square matrix \(\mat{M}\in\F^{k\times k}\), an eigenvalue \(\lambda\in\C\) satisfies
\[
  \mat{M}\vect{v}=\lambda\vect{v}
\]
for some nonzero vector \(\vect{v}\). The eigenvalues are the roots of the characteristic polynomial
\begin{equation}
  p_{\mat{M}}(\lambda)
  =\det(\lambda\I_k-\mat{M}).
  \label{eq:characteristic-polynomial}
\end{equation}
In practice one almost never forms a characteristic polynomial to compute eigenvalues. Its coefficients can be extremely sensitive to perturbation, so recovering accurate roots from them is unreliable even in modest dimensions. Determinant identities remain valuable as proof tools. That is how they are used here: to establish spectral equivalence, not to compute anything.

\subsection{Schur complements}

The proof of Sylvester's determinant identity uses a standard block determinant formula. If \(\mat{P}\) is invertible, then
\begin{equation}
  \det
  \begin{pmatrix}
    \mat{P}&\mat{Q}\\
    \mat{R}&\mat{S}
  \end{pmatrix}
  =
  \det(\mat{P})
  \det(\mat{S}-\mat{R}\mat{P}^{-1}\mat{Q}).
  \label{eq:schur-P}
\end{equation}
If \(\mat{S}\) is invertible, then
\begin{equation}
  \det
  \begin{pmatrix}
    \mat{P}&\mat{Q}\\
    \mat{R}&\mat{S}
  \end{pmatrix}
  =
  \det(\mat{S})
  \det(\mat{P}-\mat{Q}\mat{S}^{-1}\mat{R}).
  \label{eq:schur-S}
\end{equation}
Both identities follow from block Gaussian elimination. Factor the block matrix into block triangular factors, then use the fact that a block triangular determinant is the product of its diagonal blocks. Having two formulas for the same determinant is the entire mechanism of the proof in \cref{sec:sylvester}. Standard references for matrix analysis and computation include \textcite{horn2013matrix} and \textcite{golub2013matrix}.

\section{Sylvester's theorem}
\label{sec:sylvester}

\subsection{The determinant identity}

\begin{theorem}[Sylvester's determinant identity]
\label{thm:sylvester-determinant}
Let
\[
  \mat{X}\in\F^{m\times n},
  \qquad
  \mat{Y}\in\F^{n\times m}.
\]
Then
\begin{equation}
  \det(\I_m+\mat{X}\mat{Y})
  =
  \det(\I_n+\mat{Y}\mat{X}).
  \label{eq:sylvester-determinant}
\end{equation}
\end{theorem}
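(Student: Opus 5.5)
The plan is to evaluate a single block determinant in two different ways, using the two Schur complement formulas \cref{eq:schur-P} and \cref{eq:schur-S}. The paper has already signalled that this is the mechanism. I will consider the \((m+n)\times(m+n)\) block matrix
\[
  \mat{M}=
  \begin{pmatrix}
    \I_m & -\mat{X}\\
    \mat{Y} & \I_n
  \end{pmatrix}.
\]
Both diagonal blocks are identity matrices, so both are invertible. That means both Schur complement formulas apply, and no hypothesis on \(\mat{X}\) or \(\mat{Y}\) is needed.

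First, I apply \cref{eq:schur-P} with \(\mat{P}=\I_m\), \(\mat{Q}=-\mat{X}\), \(\mat{R}=\mat{Y}\), and with the lower-right block equal to \(\I_n\). This gives
\[
  \det\mat{M}=\det(\I_m)\det(\I_n-\mat{Y}\I_m^{-1}(-\mat{X}))=\det(\I_n+\mat{Y}\mat{X}).
\]
Second, I apply \cref{eq:schur-S} with the same blocks. This gives
\[
  \det\mat{M}=\det(\I_n)\det(\I_m-(-\mat{X})\I_n^{-1}\mat{Y})=\det(\I_m+\mat{X}\mat{Y}).
\]
The two expressions compute the same determinant, so they are equal, and that equality is \cref{eq:sylvester-determinant}.

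There is one point to watch. The block called \(\mat{S}\) in \cref{eq:schur-P,eq:schur-S} is a generic name for the lower-right block. It should not be confused with the factor \(\mat{S}\) in \cref{eq:A-factorization}, and in the write-up I will rename it to avoid the clash. I do not expect a real obstacle, because all of the work sits inside the block-elimination identities, which the paper takes as given. The only care needed is with the signs, so that the minus sign on \(\mat{X}\) turns both Schur complements into plus signs. If a self-contained check is wanted, I will instead exhibit the explicit factorizations
\[
  \begin{pmatrix}\I_m & -\mat{X}\\ \mat{Y} & \I_n\end{pmatrix}
  =\begin{pmatrix}\I_m & 0\\ \mat{Y} & \I_n\end{pmatrix}
   \begin{pmatrix}\I_m & -\mat{X}\\ 0 & \I_n+\mat{Y}\mat{X}\end{pmatrix}
  =\begin{pmatrix}\I_m & -\mat{X}\\ 0 & \I_n\end{pmatrix}
   \begin{pmatrix}\I_m+\mat{X}\mat{Y} & 0\\ \mat{Y} & \I_n\end{pmatrix}.
\]
Each factor is block triangular, so its determinant is the product of its diagonal-block determinants, and taking determinants of both factorizations gives the identity directly.
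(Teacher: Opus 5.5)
Your proof is correct and is essentially the paper's own argument. The paper evaluates the determinant of \(\begin{pmatrix}\I_m&\mat{X}\\-\mat{Y}&\I_n\end{pmatrix}\) with both Schur-complement formulas, so moving the minus sign from \(\mat{Y}\) to \(\mat{X}\) changes nothing, and your explicit block-triangular factorizations are exactly what the paper leaves as \cref{ex:block-proof}, which makes your version self-contained without relying on \cref{eq:schur-P,eq:schur-S}.
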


\begin{proof}
Consider the block matrix
\begin{equation}
  \mat{K}
  =
  \begin{pmatrix}
    \I_m&\mat{X}\\
    -\mat{Y}&\I_n
  \end{pmatrix}.
  \label{eq:block-K}
\end{equation}
Using the upper-left block \(\I_m\) in the Schur-complement formula gives
\[
  \det(\mat{K})
  =\det(\I_m)
   \det(\I_n+\mat{Y}\mat{X})
  =\det(\I_n+\mat{Y}\mat{X}).
\]
Using the lower-right block \(\I_n\) instead gives
\[
  \det(\mat{K})
  =\det(\I_n)
   \det(\I_m+\mat{X}\mat{Y})
  =\det(\I_m+\mat{X}\mat{Y}).
\]
The two expressions are equal because they are both \(\det(\mat{K})\).
\end{proof}

The theorem is striking because the two determinants can have different sizes. One is \(m\times m\), the other \(n\times n\), yet they agree exactly. \Cref{cor:rectangular-characteristic} will explain why. The extra \(m-n\) eigenvalues of \(\mat{X}\mat{Y}\) are all zero, so the matching eigenvalues of \(\I_m+\mat{X}\mat{Y}\) equal one and contribute nothing to the determinant.

\subsection{The characteristic-polynomial form}

\begin{corollary}[Rectangular product theorem]
\label{cor:rectangular-characteristic}
Suppose \(m\geq n\). Then
\begin{equation}
  \det(\lambda\I_m-\mat{X}\mat{Y})
  =
  \lambda^{m-n}
  \det(\lambda\I_n-\mat{Y}\mat{X}).
  \label{eq:rectangular-characteristic}
\end{equation}
Consequently, \(\mat{X}\mat{Y}\) and \(\mat{Y}\mat{X}\) have the same nonzero eigenvalues, including algebraic multiplicities.
\end{corollary}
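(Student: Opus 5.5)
The plan is to deduce \cref{eq:rectangular-characteristic} from \cref{thm:sylvester-determinant} by a scaling argument. We first treat \(\lambda\neq 0\), then extend to \(\lambda=0\) by polynomial continuity, and finally read off the spectral consequence by comparing roots.

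\emph{Step 1 (nonzero \(\lambda\)).} Fix \(\lambda\in\C\) with \(\lambda\neq0\). Factor out the scalar:
\[
  \det(\lambda\I_m-\mat{X}\mat{Y})=\lambda^{m}\det\bigl(\I_m+(-\lambda^{-1}\mat{X})\mat{Y}\bigr).
\]
Apply \cref{thm:sylvester-determinant} with \(-\lambda^{-1}\mat{X}\) in place of \(\mat{X}\). The theorem holds over \(\C\) as well, since the Schur-complement proof is purely algebraic. This converts the right-hand side to
\[
  \lambda^{m}\det(\I_n-\lambda^{-1}\mat{Y}\mat{X})=\lambda^{m-n}\det(\lambda\I_n-\mat{Y}\mat{X}).
\]
Here \(m\geq n\) ensures that \(\lambda^{m-n}\) is a genuine polynomial factor.

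\emph{Step 2 (\(\lambda=0\)).} Both sides of \cref{eq:rectangular-characteristic} are polynomials in \(\lambda\), and by Step 1 they agree at infinitely many points. Their difference is therefore the zero polynomial, and the identity holds for all \(\lambda\), including \(\lambda=0\). This step is the only delicate point: Step 1 divided by \(\lambda\), so it says nothing directly at \(\lambda=0\). Arguing through polynomial identity, rather than by evaluation, closes that gap. One could alternatively use a block-matrix factorization of
\(\begin{pmatrix}\lambda\I_m&\mat{X}\\ \mat{Y}&\I_n\end{pmatrix}\),
but the polynomial argument is shorter.

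\emph{Step 3 (spectral consequence).} By \cref{eq:rectangular-characteristic}, \(p_{\mat{X}\mat{Y}}(\lambda)=\lambda^{m-n}p_{\mat{Y}\mat{X}}(\lambda)\). Factor both characteristic polynomials over \(\C\) into linear factors. For each \(\mu\neq0\), the power of \((\lambda-\mu)\) dividing the two sides coincides, because the extra factor \(\lambda^{m-n}\) contributes only the root zero. Hence every nonzero \(\mu\) has the same algebraic multiplicity as an eigenvalue of \(\mat{X}\mat{Y}\) and of \(\mat{Y}\mat{X}\). The zero eigenvalue of \(\mat{X}\mat{Y}\) acquires \(m-n\) extra multiplicity, consistent with the remark after \cref{thm:sylvester-determinant}.
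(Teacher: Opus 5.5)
Your proposal is correct and follows essentially the same route as the paper: for \(\lambda\neq0\), factor out \(\lambda\), apply \cref{thm:sylvester-determinant} with \(-\lambda^{-1}\mat{X}\) in place of \(\mat{X}\), and extend to \(\lambda=0\) because both sides are polynomials. Your Step 3 (comparing root multiplicities) and your remark that the identity must hold over \(\C\) make explicit two points the paper leaves implicit.
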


\begin{proof}
For \(\lambda\neq0\), factor \(\lambda\) from each determinant:
\begin{align*}
  \det(\lambda\I_m-\mat{X}\mat{Y})
  &=\lambda^m
    \det\!\left(\I_m-\lambda^{-1}\mat{X}\mat{Y}\right)\\
  &=\lambda^m
    \det\!\left(\I_n-\lambda^{-1}\mat{Y}\mat{X}\right)\\
  &=\lambda^{m-n}
    \det(\lambda\I_n-\mat{Y}\mat{X}),
\end{align*}
where \cref{thm:sylvester-determinant} is applied with
\(\mat{X}\) replaced by \(-\lambda^{-1}\mat{X}\). Both sides of
\cref{eq:rectangular-characteristic} are polynomials in \(\lambda\), so equality for all nonzero \(\lambda\) implies equality for every \(\lambda\).
\end{proof}

\begin{remark}[Terminology and history]
Sylvester's 1883 paper used the language of latent roots and emphasized that reversing the order of two factors preserves the nonzero spectral information \parencite{sylvester1883equation}. Modern texts often state the result through \cref{eq:sylvester-determinant} or \cref{eq:rectangular-characteristic}. Several other theorems also bear Sylvester's name, so the phrase \emph{Sylvester's determinant identity} is the least ambiguous in this setting.
\end{remark}

\subsection{The eigenvector mapping}

The determinant proof establishes algebraic multiplicities. A direct eigenvector proof explains the geometry.

\begin{proposition}[Eigenvector transfer]
\label{prop:eigenvector-transfer}
Let \(\lambda\neq0\).
\begin{enumerate}[label=(\alph*)]
  \item If \(\mat{Y}\mat{X}\vect{v}=\lambda\vect{v}\), then
  \(\mat{X}\vect{v}\neq\vect{0}\) and
  \[
    \mat{X}\mat{Y}(\mat{X}\vect{v})
    =\lambda(\mat{X}\vect{v}).
  \]
  \item If \(\mat{X}\mat{Y}\vect{u}=\lambda\vect{u}\), then
  \(\mat{Y}\vect{u}\neq\vect{0}\) and
  \[
    \mat{Y}\mat{X}(\mat{Y}\vect{u})
    =\lambda(\mat{Y}\vect{u}).
  \]
\end{enumerate}
\end{proposition}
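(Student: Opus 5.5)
The plan is a direct verification using only associativity of matrix multiplication and the hypothesis \(\lambda\neq0\); no determinant machinery from \cref{thm:sylvester-determinant} or \cref{cor:rectangular-characteristic} is needed. Parts (a) and (b) are the same argument with the roles of \(\mat{X}\) and \(\mat{Y}\) exchanged. I would therefore write out (a) in full and obtain (b) by the substitution \(\mat{X}\leftrightarrow\mat{Y}\), \(m\leftrightarrow n\), \(\vect{v}\leftrightarrow\vect{u}\). The statement does not require \(m\geq n\) or any ordering of dimensions, so this symmetry is legitimate.

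For (a), I would first establish the nonvanishing claim. An eigenvector \(\vect{v}\) is nonzero by definition. Suppose \(\mat{X}\vect{v}=\vect{0}\). Then
\[
  \lambda\vect{v}=\mat{Y}\mat{X}\vect{v}=\mat{Y}\vect{0}=\vect{0}.
\]
Since \(\lambda\neq0\), this forces \(\vect{v}=\vect{0}\), which is a contradiction. Hence \(\mat{X}\vect{v}\neq\vect{0}\).

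Next comes the eigen-equation. I would regroup by associativity:
\[
  \mat{X}\mat{Y}(\mat{X}\vect{v})
  =\mat{X}(\mat{Y}\mat{X}\vect{v})
  =\mat{X}(\lambda\vect{v})
  =\lambda(\mat{X}\vect{v}).
\]
Combined with \(\mat{X}\vect{v}\neq\vect{0}\), this shows that \(\mat{X}\vect{v}\) is a genuine eigenvector of \(\mat{X}\mat{Y}\) for \(\lambda\).

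For (b), the identical two steps apply. If \(\mat{Y}\vect{u}=\vect{0}\), then \(\lambda\vect{u}=\mat{X}\mat{Y}\vect{u}=\vect{0}\), which forces \(\vect{u}=\vect{0}\). The regrouping \(\mat{Y}\mat{X}(\mat{Y}\vect{u})=\mat{Y}(\mat{X}\mat{Y}\vect{u})=\lambda\mat{Y}\vect{u}\) then gives the eigen-equation.

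There is no real obstacle here. The only delicate point is the nonvanishing claim, which is exactly where \(\lambda\neq0\) is used. For \(\lambda=0\), the vector \(\mat{X}\vect{v}\) can be zero, so a \(0\)-eigenvector need not transfer. I would note this briefly after the proof as a pointer toward the later discussion of the eigenvalue zero.
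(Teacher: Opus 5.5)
Your proof is correct and follows the same route as the paper's. You use \(\lambda\neq0\) and \(\vect{v}\neq\vect{0}\) to show by contradiction that \(\mat{X}\vect{v}\neq\vect{0}\), regroup \(\mat{X}\mat{Y}(\mat{X}\vect{v})=\mat{X}(\mat{Y}\mat{X}\vect{v})\) by associativity, and obtain (b) by exchanging the roles of \(\mat{X}\) and \(\mat{Y}\).
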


\begin{proof}
Suppose \(\mat{Y}\mat{X}\vect{v}=\lambda\vect{v}\). If
\(\mat{X}\vect{v}=\vect{0}\), then the left side is zero, which would imply \(\lambda\vect{v}=\vect{0}\). Since \(\lambda\neq0\), this contradicts \(\vect{v}\neq\vect{0}\), so \(\mat{X}\vect{v}\neq\vect{0}\), and
\[
  \mat{X}\mat{Y}(\mat{X}\vect{v})
  =\mat{X}(\mat{Y}\mat{X}\vect{v})
  =\lambda\mat{X}\vect{v}.
\]
The reverse statement is analogous.
\end{proof}

\subsection{Why zero is different}

The theorem does not say that the two products have identical behavior at zero. If \(m>n\), the larger product has at least \(m-n\) additional zero eigenvalues. Even when both products are square of the same size, their Jordan blocks at zero can differ.

\begin{example}[Different null structures]
Let
\[
  \mat{X}=
  \begin{pmatrix}
    1&0\\
    0&0
  \end{pmatrix},
  \qquad
  \mat{Y}=
  \begin{pmatrix}
    0&1\\
    0&0
  \end{pmatrix}.
\]
Then
\[
  \mat{X}\mat{Y}
  =
  \begin{pmatrix}
    0&1\\
    0&0
  \end{pmatrix},
  \qquad
  \mat{Y}\mat{X}
  =
  \begin{pmatrix}
    0&0\\
    0&0
  \end{pmatrix}.
\]
Both matrices have only the eigenvalue zero, but the first is a nonzero nilpotent matrix with a Jordan block of size two, whereas the second is the zero matrix.
\end{example}

The detailed relation between the elementary divisors of \(\mat{X}\mat{Y}\) and \(\mat{Y}\mat{X}\), including their zero Jordan blocks, was analyzed by \textcite{flanders1951elementary}. For reduced-system analysis, the most frequently used part is the exact agreement of the nonzero spectrum.

\section{Three factors and two reduced matrices}
\label{sec:three-factors}

Return to
\[
  \mat{A}=\mat{T}\mat{S}\mat{W}.
\]
Define
\begin{equation}
  \mat{M}=\mat{W}\mat{T}\in\F^{n\times n},
  \label{eq:M-definition}
\end{equation}
and then
\begin{equation}
  \mat{B}=\mat{M}\mat{S}=(\mat{W}\mat{T})\mat{S},
  \qquad
  \mat{C}=\mat{S}\mat{M}=\mat{S}(\mat{W}\mat{T}).
  \label{eq:B-C-definition}
\end{equation}
Two reduced matrices appear because a product of three factors can be grouped in two ways. Writing \(\mat{A}=(\mat{T}\mat{S})\mat{W}\) and applying \cref{cor:rectangular-characteristic} with \(\mat{X}=\mat{T}\mat{S}\) and \(\mat{Y}=\mat{W}\) produces \(\mat{B}\). Writing \(\mat{A}=\mat{T}(\mat{S}\mat{W})\) and applying the same corollary with \(\mat{X}=\mat{T}\) and \(\mat{Y}=\mat{S}\mat{W}\) produces \(\mat{C}\). The three-factor result needs no new theorem, only the two-factor one used twice.

The dimensions and interpretations are summarized in \cref{tab:dimensions}.

\begin{table}[htbp]
  \centering
  \caption{Dimensions and roles of the matrices in the factorization.}
  \label{tab:dimensions}
  \begin{tabular}{lll}
    \toprule
    Matrix & Dimensions & Typical role \\
    \midrule
    \(\mat{T}\) & \(m\times n\) & lifting or synthesis \\
    \(\mat{S}\) & \(n\times n\) & reduced interaction \\
    \(\mat{W}\) & \(n\times m\) & extraction, aggregation, or testing \\
    \(\mat{A}\) & \(m\times m\) & full-space operator \\
    \(\mat{B}\) & \(n\times n\) & observable-space operator \\
    \(\mat{C}\) & \(n\times n\) & lifted-coordinate operator \\
    \bottomrule
  \end{tabular}
\end{table}

\subsection{The main spectral reduction theorem}

\begin{theorem}[Three-factor spectral reduction]
\label{thm:three-factor}
Let the matrices have the dimensions in \cref{eq:dimensions}, with \(m\geq n\). Then
\begin{equation}
  \det(\lambda\I_m-\mat{A})
  =
  \lambda^{m-n}\det(\lambda\I_n-\mat{B})
  =
  \lambda^{m-n}\det(\lambda\I_n-\mat{C}).
  \label{eq:three-factor-characteristic}
\end{equation}
Therefore \(\mat{A}\), \(\mat{B}\), and \(\mat{C}\) have the same nonzero eigenvalues, including algebraic multiplicities.
\end{theorem}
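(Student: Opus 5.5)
The plan is to apply \cref{cor:rectangular-characteristic} twice, once for each grouping of the three-factor product, exactly as anticipated in the discussion after \cref{eq:B-C-definition}. Since \(m\geq n\), the corollary applies to any pair \(\mat{X}\in\F^{m\times n}\), \(\mat{Y}\in\F^{n\times m}\). No new determinant manipulation should be needed; the work is bookkeeping of dimensions and associativity.

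\emph{First equality.} I would write \(\mat{A}=(\mat{T}\mat{S})\mat{W}\) and set \(\mat{X}=\mat{T}\mat{S}\in\F^{m\times n}\), \(\mat{Y}=\mat{W}\in\F^{n\times m}\). Then \(\mat{X}\mat{Y}=\mat{A}\), and by associativity
\[
  \mat{Y}\mat{X}=\mat{W}(\mat{T}\mat{S})=(\mat{W}\mat{T})\mat{S}=\mat{M}\mat{S}=\mat{B}.
\]
\Cref{eq:rectangular-characteristic} then gives \(\det(\lambda\I_m-\mat{A})=\lambda^{m-n}\det(\lambda\I_n-\mat{B})\).

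\emph{Second equality.} I would write \(\mat{A}=\mat{T}(\mat{S}\mat{W})\) and set \(\mat{X}=\mat{T}\), \(\mat{Y}=\mat{S}\mat{W}\). Then
\[
  \mat{Y}\mat{X}=\mat{S}\mat{W}\mat{T}=\mat{S}\mat{M}=\mat{C},
\]
and the corollary gives \(\det(\lambda\I_m-\mat{A})=\lambda^{m-n}\det(\lambda\I_n-\mat{C})\). Together these establish \cref{eq:three-factor-characteristic}.

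For the spectral consequence, the multiplicity of a nonzero root \(\mu\) of \(\det(\lambda\I_m-\mat{A})\) equals its multiplicity in \(\det(\lambda\I_n-\mat{B})\), because \(\lambda^{m-n}\) does not vanish at \(\mu\); the same holds for \(\mat{C}\). Alternatively, \(\det(\lambda\I_n-\mat{B})=\det(\lambda\I_n-\mat{C})\) directly, since \(\mat{B}=\mat{M}\mat{S}\) and \(\mat{C}=\mat{S}\mat{M}\) are square reversed products, which is the \(m=n\) case of the corollary. I would remark on this as a consistency check.

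There is no genuine obstacle. The only point needing care is confirming that each grouping has the shapes the corollary requires, namely a tall \(m\times n\) factor on the left and a wide \(n\times m\) factor on the right. I would also note that nothing is asserted about the zero eigenvalue, whose structure can differ among \(\mat{A}\), \(\mat{B}\), and \(\mat{C}\).
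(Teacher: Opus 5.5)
Your proposal is correct and matches the paper's proof: it applies \cref{cor:rectangular-characteristic} once with \(\mat{X}=\mat{T}\mat{S}\), \(\mat{Y}=\mat{W}\) to obtain \(\mat{B}\), and once with \(\mat{X}=\mat{T}\), \(\mat{Y}=\mat{S}\mat{W}\) to obtain \(\mat{C}\). You also note that \(\lambda^{m-n}\) does not vanish at a nonzero root, so algebraic multiplicities carry over; the paper leaves that step implicit.
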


\begin{proof}
First set
\[
  \mat{X}=\mat{T}\mat{S},
  \qquad
  \mat{Y}=\mat{W}.
\]
Then \(\mat{X}\mat{Y}=\mat{A}\) and
\(\mat{Y}\mat{X}=\mat{B}\). By \cref{cor:rectangular-characteristic},
\[
  \det(\lambda\I_m-\mat{A})
  =\lambda^{m-n}\det(\lambda\I_n-\mat{B}).
\]
Next set
\[
  \mat{X}=\mat{T},
  \qquad
  \mat{Y}=\mat{S}\mat{W}.
\]
Then \(\mat{X}\mat{Y}=\mat{A}\) and
\(\mat{Y}\mat{X}=\mat{C}\), which gives the second equality.
\end{proof}

\begin{corollary}[The reduced matrices]
\label{cor:B-C-spectrum}
The matrices \(\mat{B}=\mat{M}\mat{S}\) and
\(\mat{C}=\mat{S}\mat{M}\) have the same characteristic polynomial:
\begin{equation}
  \det(\lambda\I_n-\mat{B})
  =
  \det(\lambda\I_n-\mat{C}).
  \label{eq:B-C-characteristic}
\end{equation}
\end{corollary}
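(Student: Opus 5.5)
The plan is to read this corollary directly off \cref{thm:three-factor}. The theorem gives
\[
  \lambda^{m-n}\det(\lambda\I_n-\mat{B})
  =
  \det(\lambda\I_m-\mat{A})
  =
  \lambda^{m-n}\det(\lambda\I_n-\mat{C})
\]
as an identity of polynomials in \(\lambda\). Both sides are products of the nonzero polynomial \(\lambda^{m-n}\) with the respective characteristic polynomials. The polynomial ring \(\F[\lambda]\) (or \(\C[\lambda]\)) is an integral domain, so the common factor \(\lambda^{m-n}\) can be cancelled. This yields \cref{eq:B-C-characteristic}.

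Alternatively, I would avoid the detour through \(\mat{A}\) altogether. This version is cleaner and works even without assuming \(m\ge n\). Apply \cref{cor:rectangular-characteristic} with \(\mat{X}=\mat{M}\) and \(\mat{Y}=\mat{S}\), both \(n\times n\). Then \(\mat{X}\mat{Y}=\mat{B}\), \(\mat{Y}\mat{X}=\mat{C}\), and the exponent \(m-n\) in that corollary becomes \(0\). So \(\det(\lambda\I_n-\mat{B})=\det(\lambda\I_n-\mat{C})\) at once.

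If one wants to see the mechanism directly, the first-principles route is equally short. For invertible \(\mat{M}\), the matrices are similar, since \(\mat{C}=\mat{M}^{-1}\mat{B}\mat{M}\). The singular case then follows by continuity or a polynomial-identity argument in the entries of \(\mat{M}\).

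There is essentially no obstacle. The only point requiring a word is the cancellation of \(\lambda^{m-n}\) in the first route. Equality as functions for \(\lambda\neq0\) already forces equality of the polynomials, because two polynomials agreeing at infinitely many points coincide, and this is the same argument used in the proof of \cref{cor:rectangular-characteristic}. I would present the second route as the proof and mention the first as a consistency check with \cref{thm:three-factor}.
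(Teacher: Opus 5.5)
Your second route, applying \cref{cor:rectangular-characteristic} with \(\mat{X}=\mat{M}\) and \(\mat{Y}=\mat{S}\) so that the extra power of \(\lambda\) disappears, is exactly the paper's proof and is correct. The cancellation argument from \cref{thm:three-factor} and the similarity-plus-density argument are also valid, but they are not needed.
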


\begin{proof}
Apply \cref{cor:rectangular-characteristic} with the two square matrices
\(\mat{M}\) and \(\mat{S}\). Because their dimensions are equal, no additional power of \(\lambda\) appears.
\end{proof}

\begin{proposition}[When \(\mat{B}\) and \(\mat{C}\) are similar]
\label{prop:B-C-similar}
If either \(\mat{S}\) or \(\mat{M}=\mat{W}\mat{T}\) is invertible, then \(\mat{B}\) and \(\mat{C}\) are similar.
\end{proposition}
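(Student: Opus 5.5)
The plan is to exhibit the similarity explicitly, conjugating by whichever factor is invertible. The two cases are symmetric, so I would treat them one after the other.

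\emph{Case 1: \(\mat{S}\) is invertible.} I would verify directly that
\[
  \mat{S}\mat{B}\mat{S}^{-1}
  =\mat{S}(\mat{M}\mat{S})\mat{S}^{-1}
  =\mat{S}\mat{M}
  =\mat{C}.
\]
Thus \(\mat{C}=\mat{S}\mat{B}\mat{S}^{-1}\), and \(\mat{S}\) itself serves as the similarity transformation.

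\emph{Case 2: \(\mat{M}=\mat{W}\mat{T}\) is invertible.} Here I would compute
\[
  \mat{M}^{-1}\mat{B}\mat{M}
  =\mat{M}^{-1}(\mat{M}\mat{S})\mat{M}
  =\mat{S}\mat{M}
  =\mat{C}.
\]
So \(\mat{C}=\mat{M}^{-1}\mat{B}\mat{M}\), and \(\mat{M}\) realizes the similarity.

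The only step with any content is the initial observation that \(\mat{B}=\mat{M}\mat{S}\) and \(\mat{C}=\mat{S}\mat{M}\) are the two orderings of a product of square matrices. Conjugating by the invertible factor simply moves that factor from one side of the product to the other. There is no real obstacle; associativity carries the whole argument.

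I would close with a remark tying the result back to \cref{sec:sylvester}. Without invertibility, \cref{cor:B-C-spectrum} still gives equal characteristic polynomials, but similarity can fail. The earlier example with \(\mat{X}\mat{Y}\) nilpotent and \(\mat{Y}\mat{X}=\mat{0}\) shows this: take \(\mat{M}=\mat{X}\) and \(\mat{S}=\mat{Y}\), both singular. This explains why the hypothesis on \(\mat{S}\) or \(\mat{M}\) is needed. The discrepancy can only occur in the Jordan structure at zero, consistent with \textcite{flanders1951elementary}.
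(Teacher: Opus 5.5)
Your proof is correct and matches the paper's argument exactly: when \(\mat{S}\) is invertible you write \(\mat{C}=\mat{S}\mat{B}\mat{S}^{-1}\), and when \(\mat{M}\) is invertible you write \(\mat{C}=\mat{M}^{-1}\mat{B}\mat{M}\). Your closing counterexample, reusing the earlier \(\mat{X},\mat{Y}\) pair as \(\mat{M}\) and \(\mat{S}\), is also valid and shows why the hypothesis is needed.
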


\begin{proof}
If \(\mat{S}\) is invertible, then
\[
  \mat{C}
  =\mat{S}\mat{M}
  =\mat{S}(\mat{M}\mat{S})\mat{S}^{-1}
  =\mat{S}\mat{B}\mat{S}^{-1}.
\]
If \(\mat{M}\) is invertible, then
\[
  \mat{C}
  =\mat{S}\mat{M}
  =\mat{M}^{-1}(\mat{M}\mat{S})\mat{M}
  =\mat{M}^{-1}\mat{B}\mat{M}.
\]
\end{proof}

Similarity is stronger than equality of characteristic polynomials. Without an invertibility assumption, \(\mat{B}\) and \(\mat{C}\) still have the same characteristic polynomial, but they need not be similar at the zero eigenvalue.

\subsection{Intertwining identities}

The most useful reduced-system relations are not determinant identities but simple associativity identities:
\begin{equation}
  \mat{A}\mat{T}
  =\mat{T}\mat{C},
  \qquad
  \mat{W}\mat{A}
  =\mat{B}\mat{W}.
  \label{eq:intertwining}
\end{equation}
Both follow by regrouping the three factors:
\[
  \mat{A}\mat{T}
  =\mat{T}\mat{S}\mat{W}\mat{T}
  =\mat{T}\mat{C},
\]
and
\[
  \mat{W}\mat{A}
  =\mat{W}\mat{T}\mat{S}\mat{W}
  =\mat{B}\mat{W}.
\]
These equations say that \(\mat{T}\) and \(\mat{W}\) connect the full and reduced operators in a consistent way. \Cref{fig:intertwining} draws them as squares. That reading is worth having because it makes the identities usable without algebra: any statement obtained along one side of a square transports to the other, which is how the eigenvector and trajectory results below are obtained.

\begin{figure}[!ht]
  \centering
  \begin{tikzpicture}[
    x=1mm,y=1mm,
    ar/.style={-{Stealth[length=2mm]},line width=0.5pt},
    lbl/.style={font=\small}
  ]
  \node (a1) at (0,0)   {$\F^n$};
  \node (a2) at (26,0)  {$\F^n$};
  \node (a3) at (0,-20) {$\F^m$};
  \node (a4) at (26,-20){$\F^m$};
  \draw[ar] (a1) -- node[lbl,above] {$\mat{C}$} (a2);
  \draw[ar] (a1) -- node[lbl,left]  {$\mat{T}$} (a3);
  \draw[ar] (a2) -- node[lbl,right] {$\mat{T}$} (a4);
  \draw[ar] (a3) -- node[lbl,below] {$\mat{A}$} (a4);
  \node[lbl] at (13,-31) {$\mat{A}\mat{T}=\mat{T}\mat{C}$};
  \begin{scope}[xshift=70mm]
  \node (b1) at (0,0)   {$\F^m$};
  \node (b2) at (26,0)  {$\F^m$};
  \node (b3) at (0,-20) {$\F^n$};
  \node (b4) at (26,-20){$\F^n$};
  \draw[ar] (b1) -- node[lbl,above] {$\mat{A}$} (b2);
  \draw[ar] (b1) -- node[lbl,left]  {$\mat{W}$} (b3);
  \draw[ar] (b2) -- node[lbl,right] {$\mat{W}$} (b4);
  \draw[ar] (b3) -- node[lbl,below] {$\mat{B}$} (b4);
  \node[lbl] at (13,-31) {$\mat{W}\mat{A}=\mat{B}\mat{W}$};
  \end{scope}
  \end{tikzpicture}
  \caption{The intertwining identities as commuting squares, where \(\mat{T}\) lifts from \(\F^n\) to \(\F^m\) and \(\mat{W}\) extracts in the other direction. Each square commutes: applying the reduced operator and then crossing gives the same map as crossing and then applying \(\mat{A}\). These two squares are the mechanism behind the eigenvector transfer of \cref{prop:reduced-eigenvectors}, the invariance of \(\range(\mat{T})\) in \cref{cor:invariant-range}, and both reduced systems of \cref{sec:ode-reduction}.}
  \label{fig:intertwining}
\end{figure}

\begin{corollary}[Invariant lifted subspace]
\label{cor:invariant-range}
The subspace \(\range(\mat{T})\) is invariant under \(\mat{A}\).
\end{corollary}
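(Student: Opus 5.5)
The plan is to read the statement directly off the first intertwining identity in \cref{eq:intertwining}, \(\mat{A}\mat{T}=\mat{T}\mat{C}\). Invariance of \(\range(\mat{T})\) under \(\mat{A}\) means that for every \(\vect{y}\in\range(\mat{T})\) we have \(\mat{A}\vect{y}\in\range(\mat{T})\). So I will take an arbitrary element of the range and track where \(\mat{A}\) sends it.

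The steps are as follows.
\begin{enumerate}[label=\arabic*.]
  \item Fix \(\vect{y}\in\range(\mat{T})\) and write \(\vect{y}=\mat{T}\vect{z}\) for some \(\vect{z}\in\F^n\).
  \item Compute
  \[
    \mat{A}\vect{y}
    =\mat{A}\mat{T}\vect{z}
    =\mat{T}(\mat{C}\vect{z}),
  \]
  using \cref{eq:intertwining}.
  \item Observe that \(\mat{T}(\mat{C}\vect{z})\) is of the form \(\mat{T}\vect{w}\) with \(\vect{w}=\mat{C}\vect{z}\in\F^n\), so it lies in \(\range(\mat{T})\).
\end{enumerate}
Since \(\vect{y}\) was arbitrary, this gives \(\mat{A}\,\range(\mat{T})\subseteq\range(\mat{T})\).

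An alternative one-line route is the inclusion \(\range(\mat{A})\subseteq\range(\mat{T})\) from \cref{eq:range-null-inclusions}. Any subspace that contains the whole range of \(\mat{A}\) is automatically invariant under \(\mat{A}\). I would mention this as a remark, but keep the intertwining argument as the main proof. The reason is that it also records how \(\mat{A}\) acts on the subspace: in lifted coordinates it acts as \(\mat{C}\), which is what the later eigenvector and trajectory results use.

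There is no real obstacle here. The only point needing care is that \(\mat{T}\) need not have full column rank. In that case the coordinates \(\vect{z}\) are not unique, and \(\mat{C}\) represents the restriction of \(\mat{A}\) only up to \(\nullsp(\mat{T})\). Invariance itself does not depend on this, because the argument works for any choice of \(\vect{z}\).
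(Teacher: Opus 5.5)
Your proposal is correct and is essentially the paper's proof: the paper also writes an arbitrary element of \(\range(\mat{T})\) as \(\mat{T}\vect{y}\) and uses the intertwining identity \(\mat{A}\mat{T}=\mat{T}\mat{C}\) to get \(\mat{A}(\mat{T}\vect{y})=\mat{T}(\mat{C}\vect{y})\in\range(\mat{T})\). Your alternative route through \(\range(\mat{A})\subseteq\range(\mat{T})\) and your remark about non-unique coordinates when \(\mat{T}\) is rank deficient are both correct, though the paper does not need either.
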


\begin{proof}
For every \(\vect{y}\in\F^n\),
\[
  \mat{A}(\mat{T}\vect{y})
  =\mat{T}(\mat{C}\vect{y})
  \in\range(\mat{T}).
\]
\end{proof}

\begin{proposition}[Reduced eigenvectors]
\label{prop:reduced-eigenvectors}
Let \(\lambda\neq0\).
\begin{enumerate}[label=(\alph*)]
  \item If \(\mat{C}\vect{v}=\lambda\vect{v}\), then
  \(\mat{T}\vect{v}\neq\vect{0}\) and
  \[
    \mat{A}(\mat{T}\vect{v})
    =\lambda\mat{T}\vect{v}.
  \]
  \item If \(\mat{B}\vect{u}=\lambda\vect{u}\), then
  \(\mat{T}\mat{S}\vect{u}\neq\vect{0}\) and
  \[
    \mat{A}(\mat{T}\mat{S}\vect{u})
    =\lambda\mat{T}\mat{S}\vect{u}.
  \]
\end{enumerate}
\end{proposition}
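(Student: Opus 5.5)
The plan is to reduce both parts to \cref{prop:eigenvector-transfer}, choosing the two-factor splittings of \(\mat{A}\) already used in the proof of \cref{thm:three-factor}.

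For part (a), take \(\mat{X}=\mat{T}\) and \(\mat{Y}=\mat{S}\mat{W}\). Then \(\mat{Y}\mat{X}=\mat{S}\mat{W}\mat{T}=\mat{C}\) and \(\mat{X}\mat{Y}=\mat{A}\). If \(\mat{C}\vect{v}=\lambda\vect{v}\) with \(\lambda\neq0\), part (a) of \cref{prop:eigenvector-transfer} gives \(\mat{X}\vect{v}=\mat{T}\vect{v}\neq\vect{0}\) and \(\mat{A}(\mat{T}\vect{v})=\lambda\mat{T}\vect{v}\). The same thing can be read off the intertwining identity \cref{eq:intertwining}: \(\mat{A}\mat{T}\vect{v}=\mat{T}\mat{C}\vect{v}=\lambda\mat{T}\vect{v}\). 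Nonvanishing also follows directly, since \(\mat{T}\vect{v}=\vect{0}\) would force \(\lambda\vect{v}=\mat{S}\mat{W}\mat{T}\vect{v}=\vect{0}\).

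For part (b), take \(\mat{X}=\mat{T}\mat{S}\) and \(\mat{Y}=\mat{W}\). Then \(\mat{Y}\mat{X}=\mat{W}\mat{T}\mat{S}=\mat{B}\) and \(\mat{X}\mat{Y}=\mat{A}\). The same proposition shows \(\mat{X}\vect{u}=\mat{T}\mat{S}\vect{u}\neq\vect{0}\), because otherwise \(\lambda\vect{u}=\mat{W}(\mat{T}\mat{S}\vect{u})=\vect{0}\). It also gives
\[
  \mat{A}(\mat{T}\mat{S}\vect{u})=\mat{T}\mat{S}\mat{W}\mat{T}\mat{S}\vect{u}=\mat{T}\mat{S}\mat{B}\vect{u}=\lambda\mat{T}\mat{S}\vect{u}.
\]

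No step is a real obstacle. The only point needing care is identifying the correct lifting map in (b). It is \(\mat{T}\mat{S}\) rather than \(\mat{T}\), because \(\mat{B}\) arises from the grouping \((\mat{T}\mat{S})\mat{W}\). Equivalently, one could note that \(\mat{S}\vect{u}\) satisfies \(\mat{C}(\mat{S}\vect{u})=\mat{S}\mat{M}\mat{S}\vect{u}=\mat{S}\mat{B}\vect{u}=\lambda\mat{S}\vect{u}\) with \(\mat{S}\vect{u}\neq\vect{0}\), and then apply part (a).
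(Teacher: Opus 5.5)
Your proof is correct and essentially the paper's: the paper reads the eigen-equation directly off the intertwining identities \(\mat{A}\mat{T}=\mat{T}\mat{C}\) and \(\mat{A}\mat{T}\mat{S}=\mat{T}\mat{S}\mat{B}\), and proves nonvanishing exactly as you do (for instance, \(\mat{T}\vect{v}=\vect{0}\) would force \(\mat{C}\vect{v}=\mat{S}\mat{W}\mat{T}\vect{v}=\vect{0}\)). Invoking \cref{prop:eigenvector-transfer} with the splittings \(\mat{X}=\mat{T},\ \mat{Y}=\mat{S}\mat{W}\) and \(\mat{X}=\mat{T}\mat{S},\ \mat{Y}=\mat{W}\) is the same associativity computation, packaged as a citation.
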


\begin{proof}
For part (a), use \(\mat{A}\mat{T}=\mat{T}\mat{C}\):
\[
  \mat{A}(\mat{T}\vect{v})
  =\mat{T}\mat{C}\vect{v}
  =\lambda\mat{T}\vect{v}.
\]
If \(\mat{T}\vect{v}=\vect{0}\), then
\(\mat{C}\vect{v}=\mat{S}\mat{W}\mat{T}\vect{v}=\vect{0}\), contradicting \(\lambda\neq0\) and \(\vect{v}\neq\vect{0}\). Part (b) follows from
\[
  \mat{A}\mat{T}\mat{S}
  =\mat{T}\mat{S}\mat{W}\mat{T}\mat{S}
  =\mat{T}\mat{S}\mat{B}.
\]
The nonvanishing argument is the same.
\end{proof}

\subsection{The biorthogonal case}

A particularly useful choice satisfies
\begin{equation}
  \mat{W}\mat{T}=\I_n.
  \label{eq:biorthogonal}
\end{equation}
Then
\begin{equation}
  \mat{B}=\mat{C}=\mat{S}.
  \label{eq:B-C-equal-S}
\end{equation}
The composite map
\begin{equation}
  \mat{P}=\mat{T}\mat{W}
  \label{eq:P-definition}
\end{equation}
is a projection because
\[
  \mat{P}^2
  =\mat{T}(\mat{W}\mat{T})\mat{W}
  =\mat{T}\mat{W}
  =\mat{P}.
\]
Its range is \(\range(\mat{T})\), and its null space is \(\nullsp(\mat{W})\). Every full vector splits as
\begin{equation}
  \vect{x}
  =\mat{P}\vect{x}+(\I_m-\mat{P})\vect{x},
  \label{eq:projection-decomposition}
\end{equation}
where the first term belongs to the active lifted subspace and the second is invisible to \(\mat{W}\).

When the columns of \(\mat{T}\) are orthonormal, the standard choice
\begin{equation}
  \mat{W}=\mat{T}^{*}
  \label{eq:orthogonal-W}
\end{equation}
gives \(\mat{W}\mat{T}=\I_n\), and \(\mat{P}=\mat{T}\mat{T}^{*}\) is the orthogonal projector onto \(\range(\mat{T})\).

\section{A worked linear algebra example}
\label{sec:worked-linear-algebra}

The following example is small enough to compute by hand but large enough to show that \(\mat{B}\) and \(\mat{C}\) need not be equal.

\begin{example}[A three-dimensional operator reduced to dimension two]
\label{ex:three-by-three}
Let
\begin{equation}
  \mat{T}=
  \begin{pmatrix}
    1&0\\
    0&1\\
    1&1
  \end{pmatrix},
  \qquad
  \mat{S}=
  \begin{pmatrix}
    -2&0\\
    0&-3
  \end{pmatrix},
  \qquad
  \mat{W}=
  \begin{pmatrix}
    1&0&0\\
    0&1&1
  \end{pmatrix}.
  \label{eq:example1-factors}
\end{equation}
Then
\begin{equation}
  \mat{M}=\mat{W}\mat{T}
  =
  \begin{pmatrix}
    1&0\\
    1&2
  \end{pmatrix}.
  \label{eq:example1-M}
\end{equation}
The two reduced matrices are
\begin{align}
  \mat{B}=\mat{M}\mat{S}
  &=
  \begin{pmatrix}
    -2&0\\
    -2&-6
  \end{pmatrix},
  \label{eq:example1-B}\\
  \mat{C}=\mat{S}\mat{M}
  &=
  \begin{pmatrix}
    -2&0\\
    -3&-6
  \end{pmatrix}.
  \label{eq:example1-C}
\end{align}
They are different matrices, but both are triangular with eigenvalues \(-2\) and \(-6\).

The full matrix is
\begin{equation}
  \mat{A}=\mat{T}\mat{S}\mat{W}
  =
  \begin{pmatrix}
    -2&0&0\\
    0&-3&-3\\
    -2&-3&-3
  \end{pmatrix}.
  \label{eq:example1-A}
\end{equation}
A direct determinant calculation gives
\begin{equation}
  \det(\lambda\I_3-\mat{A})
  =\lambda(\lambda+2)(\lambda+6).
  \label{eq:example1-charA}
\end{equation}
Meanwhile,
\begin{equation}
  \det(\lambda\I_2-\mat{B})
  =
  \det(\lambda\I_2-\mat{C})
  =(\lambda+2)(\lambda+6).
  \label{eq:example1-charBC}
\end{equation}
\Cref{eq:three-factor-characteristic} becomes
\[
  \lambda(\lambda+2)(\lambda+6)
  =\lambda(\lambda+2)(\lambda+6).
\]
The extra factor \(\lambda\) is the inactive dimension in the full space.
\end{example}

\subsection{Reconstructing full eigenvectors}

For \(\lambda=-6\), \cref{eq:example1-C} gives
\[
  \mat{C}
  \begin{pmatrix}0\\1\end{pmatrix}
  =-6
  \begin{pmatrix}0\\1\end{pmatrix}.
\]
By \cref{prop:reduced-eigenvectors}, lift with \(\mat{T}\):
\begin{equation}
  \vect{x}_{-6}
  =\mat{T}
  \begin{pmatrix}0\\1\end{pmatrix}
  =
  \begin{pmatrix}0\\1\\1\end{pmatrix}.
  \label{eq:example1-eigenvector-minus6}
\end{equation}
Multiplying by \(\mat{A}\) confirms this:
\[
  \mat{A}
  \begin{pmatrix}0\\1\\1\end{pmatrix}
  =
  \begin{pmatrix}0\\-6\\-6\end{pmatrix}
  =-6
  \begin{pmatrix}0\\1\\1\end{pmatrix}.
\]

For \(\lambda=-2\), an eigenvector of \(\mat{C}\) is
\[
  \vect{v}_{-2}
  =
  \begin{pmatrix}4\\-3\end{pmatrix},
\]
which lifts to
\begin{equation}
  \vect{x}_{-2}
  =\mat{T}\vect{v}_{-2}
  =
  \begin{pmatrix}4\\-3\\1\end{pmatrix}.
  \label{eq:example1-eigenvector-minus2}
\end{equation}
Direct multiplication verifies \(\mat{A}\vect{x}_{-2}=-2\vect{x}_{-2}\).

The zero eigenvalue requires separate treatment. Solving \(\mat{A}\vect{x}=\vect{0}\) gives, for example,
\begin{equation}
  \vect{x}_{0}
  =
  \begin{pmatrix}0\\-1\\1\end{pmatrix}.
  \label{eq:example1-zero-vector}
\end{equation}
This vector belongs to \(\nullsp(\mat{W})\), as predicted by \cref{eq:range-null-inclusions}.

\subsection{What the example teaches}

The example separates three ideas that are sometimes conflated.
\begin{enumerate}[label=\arabic*.]
  \item The matrices \(\mat{B}\) and \(\mat{C}\) can differ entry by entry.
  \item Their characteristic polynomials are nevertheless identical.
  \item Beyond what the reduced matrices already contain, the large matrix \(\mat{A}\) contributes only additional zero eigenvalues.
\end{enumerate}

For an \(m\times m\) problem with \(m\) in the millions and \(n\) in the tens or hundreds, the same algebra replaces an infeasible full spectral calculation by a manageable reduced one.

Factorizations of this shape arise in two ways. They can be built deliberately, by projecting a large operator onto a chosen low-dimensional subspace, which is the subject of \cref{sec:projection}. They also arise structurally, whenever the components of a large state interact only through a small number of aggregated quantities, so that the coupling passes through \(\mat{W}\) before returning through \(\mat{T}\). The shifted systems of \cref{sec:shifted-stability}, where a low-rank interaction sits on top of uniform decay, are of the second kind.

\section{Singular values of the three matrices}
\label{sec:singular-values}

Sylvester's theorem settles the nonzero eigenvalues of \(\mat{A}\), \(\mat{B}\), and \(\mat{C}\). It says nothing about their singular values. That distinction matters as soon as norms, conditioning, or sensitivity to perturbation enter the discussion, which in numerical work happens almost immediately. For the singular value decomposition itself, see \textcite{trefethen1997numerical} or \textcite{golub2013matrix}.

\subsection{The three matrices have different singular values}

The example of \cref{sec:worked-linear-algebra} already shows the gap. Its two reduced matrices share the eigenvalues \(-2\) and \(-6\), and yet they differ in every singular value, as recorded in \cref{tab:singular-values}.

\begin{table}[htbp]
  \centering
  \caption{Eigenvalues and singular values for the matrices of \cref{ex:three-by-three}. The nonzero eigenvalues agree, as Sylvester's theorem requires. The singular values do not.}
  \label{tab:singular-values}
  \begin{tabular}{llll}
    \toprule
    Matrix & Nonzero eigenvalues & Singular values & Condition number \\
    \midrule
    \(\mat{A}\) & \(-2,\ -6\) & \(6.194,\ 2.373,\ 0\) & \(\infty\) \\
    \(\mat{B}\) & \(-2,\ -6\) & \(6.359,\ 1.887\) & \(3.370\) \\
    \(\mat{C}\) & \(-2,\ -6\) & \(6.772,\ 1.772\) & \(3.822\) \\
    \bottomrule
  \end{tabular}
\end{table}

One feature of the table is worth pausing over. The two products agree. Both \(\mat{B}\) and \(\mat{C}\) have determinant \(12\), which their common characteristic polynomial forces, so in each case the singular values multiply to \(12\). What differs is how that product is distributed between the two, and a condition number measures exactly that distribution.

No Sylvester-type identity is available here. The singular values of \(\mat{A}\) are the positive square roots of the eigenvalues of
\begin{equation}
  \mat{A}^{*}\mat{A}
  =\mat{W}^{*}\mat{S}^{*}(\mat{T}^{*}\mat{T})\mat{S}\mat{W},
  \label{eq:AstarA}
\end{equation}
and the inner factor \(\mat{T}^{*}\mat{T}\) does not cancel. The outcome depends on \(\mat{T}\) and \(\mat{W}\) separately, not only on the product \(\mat{M}=\mat{W}\mat{T}\) that determines \(\mat{B}\) and \(\mat{C}\).

Comparing shapes makes the situation clearer. \Cref{fig:shapes} places the factorization beside the thin singular value decomposition of the same matrix. The two have the same shape, which rules out any dimensional obstruction to a singular-value analogue of \cref{thm:three-factor}. What blocks the analogue is the loss of orthonormality, and the rest of this section works out what follows from that.

\begin{figure}[!ht]
  \centering
  \begin{tikzpicture}[
    x=1mm,y=1mm,
    bx/.style={draw,line width=0.4pt},
    dim/.style={font=\scriptsize},
    nm/.style={font=\small},
    op/.style={font=\small}
  ]
  \def\M{22}
  \def\N{6}
  \node[nm,anchor=east] at (-4,-11) {$\mat{A}=\mat{T}\mat{S}\mat{W}$};
  \draw[bx] (0,0) rectangle (\N,-\M);          \node[nm] at (\N/2,-\M/2) {$\mat{T}$};
  \node[dim] at (\N/2,-\M-4) {$m\times n$};
  \node[op] at (\N+4,-11) {$\times$};
  \draw[bx] (\N+8,-8) rectangle (\N+8+\N,-8-\N);  \node[nm] at (\N+8+\N/2,-8-\N/2) {$\mat{S}$};
  \node[dim] at (\N+8+\N/2,-\M-4) {$n\times n$};
  \node[op] at (2*\N+12,-11) {$\times$};
  \draw[bx] (2*\N+16,-8) rectangle (2*\N+16+\M,-8-\N); \node[nm] at (2*\N+16+\M/2,-8-\N/2) {$\mat{W}$};
  \node[dim] at (2*\N+16+\M/2,-\M-4) {$n\times m$};
  \node[op] at (2*\N+\M+20,-11) {$=$};
  \draw[bx] (2*\N+\M+24,0) rectangle (2*\N+\M+24+\M,-\M); \node[nm] at (2*\N+\M+24+\M/2,-\M/2) {$\mat{A}$};
  \node[dim] at (2*\N+\M+24+\M/2,-\M-4) {$m\times m$};
  \begin{scope}[yshift=-42mm]
  \node[nm,anchor=east] at (-4,-11) {$\mat{M}=\mat{W}\mat{T}$};
  \draw[bx] (0,-8) rectangle (\M,-8-\N);   \node[nm] at (\M/2,-8-\N/2) {$\mat{W}$};
  \node[dim] at (\M/2,-\M-4) {$n\times m$};
  \node[op] at (\M+4,-11) {$\times$};
  \draw[bx] (\M+8,0) rectangle (\M+8+\N,-\M);  \node[nm] at (\M+8+\N/2,-\M/2) {$\mat{T}$};
  \node[dim] at (\M+8+\N/2,-\M-4) {$m\times n$};
  \node[op] at (\M+\N+12,-11) {$=$};
  \draw[bx] (\M+\N+16,-8) rectangle (\M+\N+16+\N,-8-\N); \node[nm] at (\M+\N+16+\N/2,-8-\N/2) {$\mat{M}$};
  \node[dim] at (\M+\N+16+\N/2,-\M-4) {$n\times n$};
  \end{scope}
  \begin{scope}[yshift=-84mm]
  \node[nm,anchor=east] at (-4,-11) {$\mat{A}=\mat{U}\mat{\Sigma}\mat{V}^{*}$};
  \draw[bx] (0,0) rectangle (\N,-\M);          \node[nm] at (\N/2,-\M/2) {$\mat{U}$};
  \node[dim] at (\N/2,-\M-4) {$m\times n$};
  \node[op] at (\N+4,-11) {$\times$};
  \draw[bx] (\N+8,-8) rectangle (\N+8+\N,-8-\N);  \node[nm] at (\N+8+\N/2,-8-\N/2) {$\mat{\Sigma}$};
  \node[dim] at (\N+8+\N/2,-\M-4) {$n\times n$};
  \node[op] at (2*\N+12,-11) {$\times$};
  \draw[bx] (2*\N+16,-8) rectangle (2*\N+16+\M,-8-\N); \node[nm] at (2*\N+16+\M/2,-8-\N/2) {$\mat{V}^{*}$};
  \node[dim] at (2*\N+16+\M/2,-\M-4) {$n\times m$};
  \node[op] at (2*\N+\M+20,-11) {$=$};
  \draw[bx] (2*\N+\M+24,0) rectangle (2*\N+\M+24+\M,-\M); \node[nm] at (2*\N+\M+24+\M/2,-\M/2) {$\mat{A}$};
  \node[dim] at (2*\N+\M+24+\M/2,-\M-4) {$m\times m$};
  \end{scope}
  \end{tikzpicture}
  \caption{Shape arithmetic for \(\mat{A}\in\F^{m\times m}\) with \(m\gg n\), drawn to scale. The factorization passes through an \(n\)-dimensional waist, so \(\mat{A}\) has rank at most \(n\). Reversing the order collapses \(\mat{W}\mat{T}\) to \(n\times n\), which is the step Sylvester's theorem exploits, and both \(\mat{B}=\mat{M}\mat{S}\) and \(\mat{C}=\mat{S}\mat{M}\) inherit that size. The thin singular value decomposition in the last row has the same shape signature as the first. What separates the two is not size but constraint: the columns of \(\mat{U}\) and the rows of \(\mat{V}^{*}\) are orthonormal, while \(\mat{T}\) and \(\mat{W}\) need not be. \Cref{prop:orthonormal-singular-values} shows that this constraint is sufficient for the singular values to transfer.}
  \label{fig:shapes}
\end{figure}

\subsection{The reduced matrices place no upper bound on the norm}

This is not a matter of small discrepancies. Both reduced matrices can be held completely fixed while the norm of \(\mat{A}\) grows without bound.

\begin{example}[Fixed spectrum, unbounded norm]
\label{ex:unbounded-norm}
Take \(m=2\), \(n=1\), and
\[
  \mat{T}=\begin{pmatrix}1\\0\end{pmatrix},
  \qquad
  \mat{W}=\begin{pmatrix}1&\kappa\end{pmatrix},
  \qquad
  \mat{S}=\begin{pmatrix}1\end{pmatrix},
\]
with \(\kappa\) real. Then \(\mat{W}\mat{T}=\I_1\), so
\[
  \mat{B}=\mat{C}=\mat{S}=\begin{pmatrix}1\end{pmatrix}
\]
for every \(\kappa\). The full matrix is
\[
  \mat{A}=\mat{T}\mat{S}\mat{W}
  =\begin{pmatrix}1&\kappa\\0&0\end{pmatrix},
\]
whose eigenvalues are \(1\) and \(0\), again for every \(\kappa\). Its singular values are
\[
  \sqrt{1+\kappa^{2}}
  \qquad\text{and}\qquad
  0.
\]
Either ordering gives them. Forming \(\mat{A}^{*}\mat{A}\) leads to a characteristic equation, while the reversed product
\[
  \mat{A}\mat{A}^{*}
  =\begin{pmatrix}1+\kappa^{2}&0\\0&0\end{pmatrix}
\]
is diagonal and gives the nonzero value by inspection. Choosing the convenient ordering is the same move the rest of this article makes with \(\mat{A}\) and \(\mat{B}\).
The reduced matrix reports the same thing at every \(\kappa\), while \(\norm{\mat{A}}_2\) grows without bound.
\end{example}

A reduced model therefore does not determine the conditioning of the full operator, which has to be established separately from the factors themselves. The reduced spectrum is not entirely silent: since \(\norm{\mat{A}}_2\) is at least the spectral radius, and that radius is shared with \(\mat{B}\), the reduced eigenvalues bound \(\norm{\mat{A}}_2\) from below. They place no bound on it from above, which is what \cref{ex:unbounded-norm} exploits.

\subsection{When the singular values do agree}

There is one important case in which singular values transfer exactly. The statement below is classical: it is the invariance of singular values under unitary transformation \parencite[Section~2.6]{horn2013matrix}, extended to a rectangular isometry. The proof is given anyway, because it is another application of \cref{cor:rectangular-characteristic}, this time to \(\mat{A}^{*}\mat{A}\) rather than to \(\mat{A}\).

\begin{proposition}[Orthonormal bases preserve singular values]
\label{prop:orthonormal-singular-values}
Suppose the columns of \(\mat{T}\) are orthonormal, so that \(\mat{T}^{*}\mat{T}=\I_n\), and take \(\mat{W}=\mat{T}^{*}\). Then \(\mat{B}=\mat{C}=\mat{S}\), and the nonzero singular values of \(\mat{A}=\mat{T}\mat{S}\mat{T}^{*}\) are exactly the nonzero singular values of \(\mat{S}\).
\end{proposition}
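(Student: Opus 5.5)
The first claim, \(\mat{B}=\mat{C}=\mat{S}\), is immediate. With \(\mat{W}=\mat{T}^{*}\) we have \(\mat{M}=\mat{W}\mat{T}=\mat{T}^{*}\mat{T}=\I_n\), which is the biorthogonal case \cref{eq:biorthogonal}. So \cref{eq:B-C-equal-S} applies directly.

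For the singular values, we will work with \(\mat{A}^{*}\mat{A}\), since the nonzero singular values of a matrix are the positive square roots of the nonzero eigenvalues of its Gram matrix, counted with multiplicity. First compute
\[
  \mat{A}^{*}\mat{A}
  =\mat{T}\mat{S}^{*}\mat{T}^{*}\mat{T}\mat{S}\mat{T}^{*}
  =\mat{T}(\mat{S}^{*}\mat{S})\mat{T}^{*},
\]
where the inner factor \(\mat{T}^{*}\mat{T}\) now collapses to \(\I_n\). This is exactly the cancellation that fails in general, as noted in \cref{eq:AstarA}. The result is again a three-factor product of the form in \cref{eq:A-factorization}, with lifting map \(\mat{T}\), middle factor \(\mat{S}^{*}\mat{S}\), and extraction map \(\mat{T}^{*}\). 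Applying \cref{thm:three-factor}, or equivalently \cref{cor:rectangular-characteristic} with \(\mat{X}=\mat{T}\mat{S}^{*}\mat{S}\) and \(\mat{Y}=\mat{T}^{*}\), gives
\[
  \det(\lambda\I_m-\mat{A}^{*}\mat{A})
  =\lambda^{m-n}\det\bigl(\lambda\I_n-\mat{T}^{*}\mat{T}\mat{S}^{*}\mat{S}\bigr)
  =\lambda^{m-n}\det(\lambda\I_n-\mat{S}^{*}\mat{S}).
\]
Hence \(\mat{A}^{*}\mat{A}\) and \(\mat{S}^{*}\mat{S}\) share their nonzero eigenvalues with algebraic multiplicities.

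Finally, we convert this statement about eigenvalues into one about singular values. Both \(\mat{A}^{*}\mat{A}\) and \(\mat{S}^{*}\mat{S}\) are Hermitian positive semidefinite, so their eigenvalues are real and nonnegative. For such matrices the algebraic multiplicity equals the number of times a value appears among the squared singular values. Taking positive square roots, the nonzero singular values of \(\mat{A}\) and \(\mat{S}\) coincide with multiplicity. The extra factor \(\lambda^{m-n}\) accounts only for the additional zero singular values of \(\mat{A}\), which the statement excludes.

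The only step needing care is this last bookkeeping: making sure that "same nonzero eigenvalues with algebraic multiplicity" of the Gram matrices translates into "same nonzero singular values with multiplicity." Hermitian diagonalizability is what justifies it, and I would state that explicitly rather than leave it implicit. Everything else is the cancellation \(\mat{T}^{*}\mat{T}=\I_n\) followed by a direct reuse of \cref{cor:rectangular-characteristic}.
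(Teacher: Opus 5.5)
Your proposal is correct and follows the paper's proof essentially step for step. The cancellation \(\mat{T}^{*}\mat{T}=\I_n\) reduces \(\mat{A}^{*}\mat{A}\) to \(\mat{T}(\mat{S}^{*}\mat{S})\mat{T}^{*}\), and \cref{cor:rectangular-characteristic} with \(\mat{X}=\mat{T}\mat{S}^{*}\mat{S}\), \(\mat{Y}=\mat{T}^{*}\) then transfers the nonzero eigenvalues to \(\mat{S}^{*}\mat{S}\), with your added note on Hermitian multiplicity bookkeeping spelling out what the paper compresses into ``taking positive square roots.''
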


\begin{proof}
Because \(\mat{W}\mat{T}=\mat{T}^{*}\mat{T}=\I_n\), the matrix \(\mat{M}\) is the identity, so \(\mat{B}=\mat{C}=\mat{S}\). For the singular values,
\[
  \mat{A}^{*}\mat{A}
  =\mat{T}\mat{S}^{*}(\mat{T}^{*}\mat{T})\mat{S}\mat{T}^{*}
  =\mat{T}(\mat{S}^{*}\mat{S})\mat{T}^{*}.
\]
Apply \cref{cor:rectangular-characteristic} with
\[
  \mat{X}=\mat{T}(\mat{S}^{*}\mat{S}),
  \qquad
  \mat{Y}=\mat{T}^{*}.
\]
Then \(\mat{X}\mat{Y}=\mat{A}^{*}\mat{A}\) and
\(\mat{Y}\mat{X}=\mat{S}^{*}\mat{S}\), so the two have the same nonzero eigenvalues. Taking positive square roots gives the statement.
\end{proof}

\begin{remark}[Biorthogonality is not enough]
The condition \(\mat{W}\mat{T}=\I_n\) by itself does not give this conclusion, as \cref{ex:unbounded-norm} shows. It does force \(\mat{B}=\mat{C}=\mat{S}\). Two hypotheses share the work in \cref{prop:orthonormal-singular-values}. Orthonormal columns, \(\mat{T}^{*}\mat{T}=\I_n\), cancel the inner factor of \cref{eq:AstarA}, leaving \(\mat{A}^{*}\mat{A}=\mat{W}^{*}(\mat{S}^{*}\mat{S})\mat{W}\). The pairing \(\mat{W}=\mat{T}^{*}\) then collapses the outer factors: by \cref{cor:rectangular-characteristic} applied once more, the nonzero eigenvalues of \(\mat{W}^{*}(\mat{S}^{*}\mat{S})\mat{W}\) are those of \((\mat{S}^{*}\mat{S})(\mat{W}\mat{W}^{*})\), and \(\mat{W}=\mat{T}^{*}\) forces \(\mat{W}\mat{W}^{*}=\I_n\). Neither hypothesis can be dropped. Taking \(\mat{T}=(1,0)^{\mathsf T}\), \(\mat{W}=(1,3)\), and \(\mat{S}=(1)\) gives both \(\mat{T}^{*}\mat{T}=\I_1\) and \(\mat{W}\mat{T}=\I_1\), yet \(\mat{W}\mat{W}^{*}=10\), so the one nonzero singular value of \(\mat{A}\) is \(\sqrt{10}\) rather than \(1\). An oblique biorthogonal pair leaves \cref{eq:AstarA} with oblique outer factors.
\end{remark}

\subsection{Why this matters in practice}

Three consequences deserve attention.

Condition numbers do not transfer. The sensitivity of a full linear system cannot be read off a reduced matrix, and the same holds for least-squares problems, where sensitivity is governed by singular values rather than by eigenvalues.

Errors can grow when a reduced solution is lifted. That growth is measured by \(\norm{\mat{T}}\), a quantity invisible to both \(\mat{B}\) and \(\mat{C}\). \Cref{sec:limitations} returns to the point.

The reduction strategies surveyed in \cref{sec:projection} are themselves built from singular values. Proper orthogonal decomposition selects directions by the singular values of a snapshot matrix, and balanced truncation ranks states by Hankel singular values \parencite{antoulas2005approximation}. Neither is a spectral construction. This is one reason orthonormal bases are preferred whenever a reduced model has to be trusted quantitatively.

\section{Reduced algebraic systems}
\label{sec:algebraic-systems}

\subsection{Applying the full operator without forming it}

Given \(\vect{x}\in\F^m\), compute \(\mat{A}\vect{x}\) in three steps:
\begin{align}
  \vect{q}&=\mat{W}\vect{x},
  \label{eq:apply-step1}\\
  \vect{r}&=\mat{S}\vect{q},
  \label{eq:apply-step2}\\
  \mat{A}\vect{x}&=\mat{T}\vect{r}.
  \label{eq:apply-step3}
\end{align}
No \(m\times m\) matrix is required. For dense factors, the cost is on the order of \(mn+n^2\) operations rather than \(m^2\). If \(\mat{T}\) or \(\mat{W}\) is sparse or has fast transforms, the savings can be larger.

\subsection{Powers of the large matrix}

\begin{proposition}[Powers through the reduced matrices]
\label{prop:powers}
For every integer \(k\geq1\),
\begin{equation}
  \mat{A}^k
  =\mat{T}\mat{S}\mat{B}^{k-1}\mat{W}
  =\mat{T}\mat{C}^{k-1}\mat{S}\mat{W}.
  \label{eq:powers-reduced}
\end{equation}
\end{proposition}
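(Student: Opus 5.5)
The plan is induction on \(k\), with the intertwining identities \cref{eq:intertwining} doing the regrouping. Alternatively, expand \(\mat{A}^k\) as the word \(\mat{T}\mat{S}\mat{W}\mat{T}\mat{S}\mat{W}\cdots\mat{T}\mat{S}\mat{W}\), with \(k\) copies of the triple, and regroup it by associativity. I would do the induction formally and mention the regrouping as the intuition.

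The base case \(k=1\) holds directly. With the convention \(\mat{B}^0=\mat{C}^0=\I_n\), both right-hand sides reduce to \(\mat{T}\mat{S}\mat{W}=\mat{A}\).

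For the inductive step on the first form, assume \(\mat{A}^k=\mat{T}\mat{S}\mat{B}^{k-1}\mat{W}\). Multiply on the right by \(\mat{A}\) and use \(\mat{W}\mat{A}=\mat{B}\mat{W}\) from \cref{eq:intertwining}:
\[
  \mat{A}^{k+1}=\mat{T}\mat{S}\mat{B}^{k-1}(\mat{W}\mat{A})=\mat{T}\mat{S}\mat{B}^{k}\mat{W}.
\]

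For the second form, assume \(\mat{A}^k=\mat{T}\mat{C}^{k-1}\mat{S}\mat{W}\). Multiply on the left by \(\mat{A}\) and use \(\mat{A}\mat{T}=\mat{T}\mat{C}\):
\[
  \mat{A}^{k+1}=(\mat{A}\mat{T})\mat{C}^{k-1}\mat{S}\mat{W}=\mat{T}\mat{C}^{k}\mat{S}\mat{W}.
\]

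As a consistency check, the two forms agree because \(\mat{S}\mat{B}^{j}=\mat{C}^{j}\mat{S}\). This follows by induction from \(\mat{S}\mat{B}=\mat{S}\mat{M}\mat{S}=\mat{C}\mat{S}\), and I would record it in passing. Nothing here is a real obstacle. The only care needed is choosing the side on which to multiply in each induction, so that the available intertwining identity applies, and stating the \(k=1\) convention explicitly.
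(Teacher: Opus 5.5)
Your proof is correct and matches the paper's argument: induction on \(k\), where the inductive step regroups the product. The paper writes \(\mat{A}^{k}\mat{A}\) and groups \((\mat{W}\mat{T})\mat{S}=\mat{B}\), which is exactly your use of \(\mat{W}\mat{A}=\mat{B}\mat{W}\); for the second form you multiply on the left using \(\mat{A}\mat{T}=\mat{T}\mat{C}\) instead of regrouping \(\mat{S}\mat{W}\mat{T}=\mat{C}\) on the right, which is an inessential variation.
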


\begin{proof}
For the first identity, the case \(k=1\) is \cref{eq:A-factorization}. If it holds for \(k\), then
\begin{align*}
  \mat{A}^{k+1}
  &=\mat{A}^k\mat{A}\\
  &=\mat{T}\mat{S}\mat{B}^{k-1}\mat{W}
    \mat{T}\mat{S}\mat{W}\\
  &=\mat{T}\mat{S}\mat{B}^{k-1}
    (\mat{W}\mat{T})\mat{S}\mat{W}\\
  &=\mat{T}\mat{S}\mat{B}^{k}\mat{W}.
\end{align*}
The proof of the second identity is analogous, grouping
\(\mat{S}\mat{W}\mat{T}=\mat{C}\).
\end{proof}

Repeated application of \(\mat{A}\) therefore reduces to repeated application of either \(\mat{B}\) or \(\mat{C}\). For example,
\begin{equation}
  \mat{A}^k\vect{x}
  =\mat{T}\mat{S}\mat{B}^{k-1}(\mat{W}\vect{x}).
  \label{eq:power-vector}
\end{equation}
The full dimension appears only in the first extraction and the final lifting.

\subsection{Shifted linear systems and the resolvent}

The matrix \(\mat{A}\) is singular whenever \(m>n\), so a system
\(\mat{A}\vect{x}=\vect{b}\) may have no solution or many solutions. A more useful problem is the shifted system
\begin{equation}
  (\mu\I_m-\mat{A})\vect{x}=\vect{b},
  \label{eq:shifted-system}
\end{equation}
where \(\mu\neq0\).

\begin{theorem}[Reduced resolvent formula]
\label{thm:resolvent}
Assume \(\mu\neq0\) and \(\mu\I_n-\mat{B}\) is invertible. Then
\begin{equation}
  (\mu\I_m-\mat{A})^{-1}
  =
  \frac{1}{\mu}\I_m
  +
  \frac{1}{\mu}
  \mat{T}\mat{S}
  (\mu\I_n-\mat{B})^{-1}
  \mat{W}.
  \label{eq:reduced-resolvent}
\end{equation}
\end{theorem}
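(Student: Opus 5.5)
The plan is to verify \cref{eq:reduced-resolvent} directly. Let \(\mat{R}\) denote its right-hand side and show that \((\mu\I_m-\mat{A})\mat{R}=\I_m\). Since both matrices are square of size \(m\), a one-sided inverse is automatically two-sided. This also proves that \(\mu\I_m-\mat{A}\) is invertible under the stated hypotheses, so we need not appeal to \cref{thm:three-factor} for that fact. (The theorem would give it anyway: \(\mu\neq0\) is not a root of \(\lambda^{m-n}\det(\lambda\I_n-\mat{B})\).)

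The computation uses only associativity, together with the one relation \(\mat{W}\mat{A}=\mat{B}\mat{W}\) from \cref{eq:intertwining}, or more concretely \(\mat{W}\mat{T}\mat{S}=\mat{M}\mat{S}=\mat{B}\). Expanding gives
\[
  (\mu\I_m-\mat{A})\mat{R}
  =\I_m-\tfrac{1}{\mu}\mat{A}
   +\mat{T}\mat{S}(\mu\I_n-\mat{B})^{-1}\mat{W}
   -\tfrac{1}{\mu}\mat{T}\mat{S}\mat{W}\mat{T}\mat{S}(\mu\I_n-\mat{B})^{-1}\mat{W}.
\]
In the last term we replace \(\mat{W}\mat{T}\mat{S}\) by \(\mat{B}\). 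The last two terms then combine into
\[
  \tfrac{1}{\mu}\mat{T}\mat{S}\,(\mu\I_n-\mat{B})(\mu\I_n-\mat{B})^{-1}\mat{W}
  =\tfrac{1}{\mu}\mat{T}\mat{S}\mat{W}
  =\tfrac{1}{\mu}\mat{A},
\]
and this cancels the \(-\tfrac{1}{\mu}\mat{A}\) term. What remains is \(\I_m\).

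There is no real obstacle. The only step requiring care is grouping the product so that exactly the factor \(\mat{W}\mat{T}\mat{S}\) appears and becomes \(\mathbf{B}\). Alternatively, one can motivate the formula rather than merely verify it. Write \(\mu\I_m-\mat{A}=\mu(\I_m-\mu^{-1}(\mat{T}\mat{S})\mat{W})\) and apply the push-through identity \((\I_m-\mat{X}\mat{Y})^{-1}=\I_m+\mat{X}(\I_n-\mat{Y}\mat{X})^{-1}\mat{Y}\) with \(\mat{X}=\mu^{-1}\mat{T}\mat{S}\) and \(\mat{Y}=\mat{W}\). This is the inverse-level counterpart of \cref{thm:sylvester-determinant}. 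Since \(\mat{Y}\mat{X}=\mu^{-1}\mat{B}\), the identity yields \eqref{eq:reduced-resolvent} after the scalars are simplified. The direct verification above is the version I would write out.
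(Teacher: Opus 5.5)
Your proposal is correct and matches the paper's proof: you expand $(\mu\I_m-\mat{A})\mat{R}$, replace $\mat{W}\mat{T}\mat{S}$ by $\mat{B}$ so the last two terms collapse to $\tfrac{1}{\mu}\mat{A}$, and conclude because a one-sided inverse of a square matrix is two-sided. Your push-through remark is the same observation the paper makes when it calls the formula a low-rank case of the Sherman--Morrison--Woodbury identity.
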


\begin{proof}
Let
\[
  \mat{R}
  =
  \frac{1}{\mu}\I_m
  +
  \frac{1}{\mu}
  \mat{T}\mat{S}
  (\mu\I_n-\mat{B})^{-1}\mat{W}.
\]
Then
\begin{align*}
  (\mu\I_m-\mat{A})\mat{R}
  &=\I_m-\frac{1}{\mu}\mat{A}
    +\mat{T}\mat{S}(\mu\I_n-\mat{B})^{-1}\mat{W}\\
  &\quad
    -\frac{1}{\mu}
     \mat{T}\mat{S}\mat{W}\mat{T}\mat{S}
     (\mu\I_n-\mat{B})^{-1}\mat{W}\\
  &=\I_m-\frac{1}{\mu}\mat{T}\mat{S}\mat{W}
    +\mat{T}\mat{S}
     \left[\I_n-\frac{1}{\mu}\mat{B}\right]
     (\mu\I_n-\mat{B})^{-1}\mat{W}\\
  &=\I_m-\frac{1}{\mu}\mat{T}\mat{S}\mat{W}
    +\frac{1}{\mu}\mat{T}\mat{S}\mat{W}\\
  &=\I_m.
\end{align*}
Because \(\mu\I_m-\mat{A}\) is square, a right inverse is the inverse, so \(\mat{R}=(\mu\I_m-\mat{A})^{-1}\). The formula is a low-rank case of the Sherman--Morrison--Woodbury identity, the standard tool for inverting a matrix that differs from an easily inverted one by a low-rank term.
\end{proof}

The theorem converts \cref{eq:shifted-system} into the following procedure:
\begin{enumerate}[label=\arabic*.]
  \item Form the reduced right-hand side \(\vect{g}=\mat{W}\vect{b}\).
  \item Solve
  \begin{equation}
    (\mu\I_n-\mat{B})\vect{z}=\vect{g}.
    \label{eq:reduced-shifted-solve}
  \end{equation}
  \item Reconstruct
  \begin{equation}
    \vect{x}
    =\frac{1}{\mu}\vect{b}
     +\frac{1}{\mu}\mat{T}\mat{S}\vect{z}.
    \label{eq:reconstruct-shifted-solve}
  \end{equation}
\end{enumerate}

\begin{example}[Solving a shifted system through dimension two]
\label{ex:shifted-solve}
Use the matrices from \cref{ex:three-by-three}, take \(\mu=1\), and let
\[
  \vect{b}=
  \begin{pmatrix}1\\2\\0\end{pmatrix}.
\]
The reduced right-hand side is
\[
  \mat{W}\vect{b}
  =
  \begin{pmatrix}1\\2\end{pmatrix}.
\]
Since
\[
  \I_2-\mat{B}
  =
  \begin{pmatrix}
    3&0\\
    2&7
  \end{pmatrix},
\]
we solve
\[
  \begin{pmatrix}
    3&0\\
    2&7
  \end{pmatrix}
  \vect{z}
  =
  \begin{pmatrix}1\\2\end{pmatrix}
\]
to obtain
\begin{equation}
  \vect{z}
  =
  \begin{pmatrix}
    1/3\\
    4/21
  \end{pmatrix}.
  \label{eq:example-z}
\end{equation}
Then
\begin{align*}
  \vect{x}
  &=\vect{b}+\mat{T}\mat{S}\vect{z}\\
  &=
  \begin{pmatrix}
    1/3\\
    10/7\\
    -26/21
  \end{pmatrix}.
\end{align*}
Direct substitution verifies
\[
  (\I_3-\mat{A})\vect{x}=\vect{b}.
\]
The only linear solve was \(2\times2\).
\end{example}

\subsection{Matrix functions}

The same power identity reduces analytic functions of \(\mat{A}\). For background on matrix functions, see \textcite{higham2008functions}.

\begin{proposition}[Analytic matrix functions]
\label{prop:matrix-functions}
Suppose \(f\) has a power series about the origin whose radius of convergence exceeds the spectral radius of \(\mat{A}\), equivalently of \(\mat{B}\), since the two share their nonzero eigenvalues. Define
\begin{equation}
  g(z)=
  \begin{cases}
    \dfrac{f(z)-f(0)}{z},&z\neq0,\\[6pt]
    f'(0),&z=0.
  \end{cases}
  \label{eq:g-definition}
\end{equation}
Then
\begin{equation}
  f(\mat{A})
  =f(0)\I_m+\mat{T}\mat{S}g(\mat{B})\mat{W}.
  \label{eq:matrix-function-reduction}
\end{equation}
\end{proposition}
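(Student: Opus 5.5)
The plan is to write \(f(z)=\sum_{k\ge0}a_kz^k\) and use \cref{prop:powers} term by term. Then \(f(\mat{A})=a_0\I_m+\sum_{k\ge1}a_k\mat{A}^k\), and by \cref{eq:powers-reduced} each \(\mat{A}^k\) with \(k\ge1\) equals \(\mat{T}\mat{S}\mat{B}^{k-1}\mat{W}\). Since \(a_0=f(0)\), the partial sums are
\[
  \sum_{k=0}^{N}a_k\mat{A}^k
  =f(0)\I_m+\mat{T}\mat{S}\Bigl(\sum_{k=1}^{N}a_k\mat{B}^{k-1}\Bigr)\mat{W}.
\]
The inner sum is a partial sum of the series \(\sum_{j\ge0}a_{j+1}z^j\), and for \(z\neq0\) this series equals \((f(z)-f(0))/z\), while at \(z=0\) it equals \(a_1=f'(0)\). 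So it is exactly the power series of \(g\) in \cref{eq:g-definition}, and the inner matrix sum converges to \(g(\mat{B})\).

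The one step needing care is convergence, which is the main (though mild) obstacle. The series for \(g\) has the same radius of convergence \(\rho\) as that for \(f\), since dividing by \(z\) and dropping the constant term does not change \(\limsup|a_k|^{1/k}\). By \cref{thm:three-factor}, \(\mat{A}\) and \(\mat{B}\) share their nonzero eigenvalues, and the remaining eigenvalues are zero. Hence their spectral radii coincide and are strictly below \(\rho\), which justifies the word ``equivalently'' in the hypothesis.

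The standard fact I will invoke is that a power series with radius \(\rho\) converges, even absolutely, at any square matrix of spectral radius less than \(\rho\). One way to see this is to pick a norm in which \(\norm{\mat{B}}<\rho\), or to use \(\norm{\mat{B}^k}^{1/k}\to\) the spectral radius by Gelfand's formula. This gives convergence of both \(f(\mat{A})\) and \(g(\mat{B})\).

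Finally, I pass to the limit \(N\to\infty\) in the partial-sum identity. Left and right multiplication by the fixed matrices \(\mat{T}\mat{S}\) and \(\mat{W}\) is continuous, so the limit gives \cref{eq:matrix-function-reduction}.
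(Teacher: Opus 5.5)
Your proposal is correct and follows the paper's proof. You expand \(f\) as a power series, replace each \(\mat{A}^k\) (\(k\geq1\)) by \(\mat{T}\mat{S}\mat{B}^{k-1}\mat{W}\) via \cref{prop:powers}, and recognize the inner sum as the series for \(g\) evaluated at \(\mat{B}\). Your convergence argument fills in details the paper leaves implicit: equal radii of convergence for \(f\) and \(g\), equal spectral radii of \(\mat{A}\) and \(\mat{B}\), and passage to the limit in the partial sums.
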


\begin{proof}
Write \(f(z)=\sum_{k=0}^{\infty}a_kz^k\). Then
\[
  f(\mat{A})
  =a_0\I_m+
   \sum_{k=1}^{\infty}a_k\mat{A}^k.
\]
Using \cref{eq:powers-reduced},
\begin{align*}
  f(\mat{A})
  &=a_0\I_m+
    \mat{T}\mat{S}
    \left(
      \sum_{k=1}^{\infty}a_k\mat{B}^{k-1}
    \right)
    \mat{W}\\
  &=f(0)\I_m+\mat{T}\mat{S}g(\mat{B})\mat{W}.
\end{align*}
\end{proof}

For the matrix exponential, define
\begin{equation}
  \varphi_1(z)=
  \begin{cases}
    \dfrac{\e^z-1}{z},&z\neq0,\\[6pt]
    1,&z=0.
  \end{cases}
  \label{eq:phi1}
\end{equation}
Then
\begin{equation}
  \e^{t\mat{A}}
  =\I_m+t\mat{T}\mat{S}\varphi_1(t\mat{B})\mat{W}.
  \label{eq:exponential-reduction}
\end{equation}
This formula will reappear in the ODE analysis.

\section{Exact reduction of first-order linear ODEs}
\label{sec:ode-reduction}

Consider the autonomous system
\begin{equation}
  \vect{x}'(t)=\mat{A}\vect{x}(t),
  \qquad
  \mat{A}=\mat{T}\mat{S}\mat{W},
  \qquad
  \vect{x}(0)=\vect{x}_0.
  \label{eq:full-ode}
\end{equation}
There are two natural reduced variables. They lead to \(\mat{B}\) and \(\mat{C}\), respectively.

\subsection{Reduction by extracted observables}

Define
\begin{equation}
  \vect{q}(t)=\mat{W}\vect{x}(t)\in\F^n.
  \label{eq:q-definition}
\end{equation}
Then
\begin{align}
  \vect{q}'(t)
  &=\mat{W}\vect{x}'(t)\notag\\
  &=\mat{W}\mat{T}\mat{S}\mat{W}\vect{x}(t)\notag\\
  &=\mat{B}\vect{q}(t).
  \label{eq:q-reduced-ode}
\end{align}
The initial condition is
\begin{equation}
  \vect{q}(0)=\mat{W}\vect{x}_0.
  \label{eq:q-initial}
\end{equation}
\Cref{eq:q-reduced-ode} is an exact closed system for the extracted variables, and it holds for every initial state \(\vect{x}_0\) with no restriction whatever.

Once \(\vect{q}(t)\) is known, the full derivative is
\begin{equation}
  \vect{x}'(t)=\mat{T}\mat{S}\vect{q}(t),
  \label{eq:xprime-from-q}
\end{equation}
so
\begin{equation}
  \vect{x}(t)
  =\vect{x}_0+
   \int_0^t\mat{T}\mat{S}\vect{q}(s)\,\dd s.
  \label{eq:x-reconstruction-integral}
\end{equation}
Because
\[
  \vect{q}(t)=\e^{t\mat{B}}\mat{W}\vect{x}_0,
\]
this integral reproduces \cref{eq:exponential-reduction}.

\subsection{Reduction by lifted coordinates}

Suppose \(\mat{T}\) has full column rank and the initial condition belongs to \(\range(\mat{T})\). Write
\begin{equation}
  \vect{x}_0=\mat{T}\vect{y}_0.
  \label{eq:x0-in-range}
\end{equation}
Because \(\range(\mat{T})\) is invariant under \(\mat{A}\) by \cref{cor:invariant-range}, every term of the series for \(\e^{t\mat{A}}\vect{x}_0\) lies in that subspace, so the solution remains there for all \(t\). Write
\begin{equation}
  \vect{x}(t)=\mat{T}\vect{y}(t).
  \label{eq:x-Ty}
\end{equation}
Substitution into \cref{eq:full-ode} gives
\[
  \mat{T}\vect{y}'(t)
  =\mat{A}\mat{T}\vect{y}(t)
  =\mat{T}\mat{C}\vect{y}(t).
\]
Because \(\mat{T}\) has full column rank,
\begin{equation}
  \vect{y}'(t)=\mat{C}\vect{y}(t),
  \qquad
  \vect{y}(0)=\vect{y}_0.
  \label{eq:y-reduced-ode}
\end{equation}
The full solution is reconstructed simply by
\begin{equation}
  \vect{x}(t)=\mat{T}\e^{t\mat{C}}\vect{y}_0.
  \label{eq:x-from-y}
\end{equation}

The two reduced states have different meanings:
\begin{equation}
  \vect{q}=\mat{W}\vect{x}
  \quad\text{contains extracted observables,}
  \qquad
  \vect{y}
  \quad\text{contains coefficients in the columns of }\mat{T}.
  \label{eq:q-y-meaning}
\end{equation}
If \(\vect{x}=\mat{T}\vect{y}\), then
\begin{equation}
  \vect{q}=\mat{M}\vect{y}.
  \label{eq:q-My}
\end{equation}
When \(\mat{M}\) is invertible, the two reduced coordinates are related by a change of basis, and \(\mat{B}\) and \(\mat{C}\) are similar.

The two reductions also differ in what they require. The observable reduction \cref{eq:q-reduced-ode} needs no assumption on \(\vect{x}_0\), because \(\vect{q}=\mat{W}\vect{x}\) is defined for every full state. The lifted reduction \cref{eq:y-reduced-ode} applies only when \(\vect{x}_0\) lies in \(\range(\mat{T})\). Outside that subspace there is no \(\vect{y}_0\) to start from.

\subsection{The biorthogonal decomposition of a trajectory}

If \(\mat{W}\mat{T}=\I_n\), then \(\mat{P}=\mat{T}\mat{W}\) is a projection. Decompose
\begin{equation}
  \vect{x}_0=\mat{P}\vect{x}_0+(\I_m-\mat{P})\vect{x}_0.
  \label{eq:x0-active-inactive}
\end{equation}
The second term, \((\I_m-\mat{P})\vect{x}_0\), lies in \(\nullsp(\mat{W})\), so it is annihilated by \(\mat{A}\) and holds its initial value for all \(t\). It is this whole component that stays fixed, not any single coordinate of \(\vect{x}\). The active component evolves in \(\range(\mat{T})\). The trajectory splits accordingly:
\begin{equation}
  \vect{x}(t)
  =
  (\I_m-\mat{P})\vect{x}_0
  +\mat{T}\e^{t\mat{S}}\mat{W}\vect{x}_0.
  \label{eq:biorthogonal-solution}
\end{equation}
This formula cleanly separates inactive directions from reduced dynamics.

\section{A worked ODE example}
\label{sec:worked-ode}

\begin{example}[A three-dimensional ODE solved in dimension two]
\label{ex:ode-three-to-two}
Let
\begin{equation}
  \mat{T}=
  \begin{pmatrix}
    1&0\\
    0&1\\
    1&1
  \end{pmatrix},
  \qquad
  \mat{W}=
  \begin{pmatrix}
    1&0&0\\
    0&1&0
  \end{pmatrix},
  \qquad
  \mat{S}=
  \begin{pmatrix}
    -1&1\\
    0&-2
  \end{pmatrix}.
  \label{eq:ode-example-factors}
\end{equation}
Here
\begin{equation}
  \mat{W}\mat{T}=\I_2,
  \label{eq:ode-example-WT}
\end{equation}
so \(\mat{B}=\mat{C}=\mat{S}\). The full matrix is
\begin{equation}
  \mat{A}
  =\mat{T}\mat{S}\mat{W}
  =
  \begin{pmatrix}
    -1&1&0\\
    0&-2&0\\
    -1&-1&0
  \end{pmatrix}.
  \label{eq:ode-example-A}
\end{equation}
Consider
\begin{equation}
  \vect{x}'=\mat{A}\vect{x},
  \qquad
  \vect{x}(0)=\mat{T}
  \begin{pmatrix}a\\b\end{pmatrix}
  =
  \begin{pmatrix}a\\b\\a+b\end{pmatrix}.
  \label{eq:ode-example-full}
\end{equation}
Because the initial state lies in \(\range(\mat{T})\), write
\(\vect{x}=\mat{T}\vect{y}\). The reduced problem is
\begin{equation}
  \vect{y}'=\mat{S}\vect{y},
  \qquad
  \vect{y}(0)=
  \begin{pmatrix}a\\b\end{pmatrix},
  \label{eq:ode-example-reduced}
\end{equation}
or, componentwise,
\begin{align}
  y_1'&=-y_1+y_2,
  \label{eq:ode-example-y1}\\
  y_2'&=-2y_2.
  \label{eq:ode-example-y2}
\end{align}
The second equation gives
\begin{equation}
  y_2(t)=b\e^{-2t}.
  \label{eq:ode-example-y2-sol}
\end{equation}
Using an integrating factor in the first equation, and fixing the constant so that \(y_1(0)=a\),
\begin{equation}
  y_1(t)=(a+b)\e^{-t}-b\e^{-2t}.
  \label{eq:ode-example-y1-sol}
\end{equation}
Lifting with \(\mat{T}\) gives
\begin{equation}
  \vect{x}(t)
  =
  \begin{pmatrix}
    (a+b)\e^{-t}-b\e^{-2t}\\
    b\e^{-2t}\\
    (a+b)\e^{-t}
  \end{pmatrix}.
  \label{eq:ode-example-x-sol}
\end{equation}
Direct differentiation verifies \(\vect{x}'=\mat{A}\vect{x}\).
\end{example}

\subsection{Spectral interpretation}

The eigenvalues of \(\mat{S}\) are \(-1\) and \(-2\). Sylvester's theorem predicts that the nonzero eigenvalues of \(\mat{A}\) are \(-1\) and \(-2\). Because \(m-n=1\), the full matrix has one additional zero eigenvalue:
\begin{equation}
  \spec(\mat{A})=\{0,-1,-2\}.
  \label{eq:ode-example-spectrum}
\end{equation}
The zero eigenvector \(\vect{e}_3=(0,0,1)^{\mathsf T}\) lies outside \(\range(\mat{T})\) and belongs to \(\nullsp(\mat{W})\). It represents an inactive component that remains constant. The chosen initial condition in \cref{eq:ode-example-full} has no inactive component, so only the two decaying modes appear in \cref{eq:ode-example-x-sol}.

\subsection{A general initial condition}

For an arbitrary
\[
  \vect{x}_0=
  \begin{pmatrix}x_{10}\\x_{20}\\x_{30}\end{pmatrix},
\]
the projector is
\begin{equation}
  \mat{P}=\mat{T}\mat{W}
  =
  \begin{pmatrix}
    1&0&0\\
    0&1&0\\
    1&1&0
  \end{pmatrix}.
  \label{eq:ode-example-P}
\end{equation}
The inactive part is
\begin{equation}
  (\I_3-\mat{P})\vect{x}_0
  =
  \begin{pmatrix}
    0\\0\\x_{30}-x_{10}-x_{20}
  \end{pmatrix}.
  \label{eq:ode-example-inactive}
\end{equation}
It remains constant, while the active variables evolve according to \(\mat{S}\). This gives the full solution
\begin{equation}
  \vect{x}(t)
  =(\I_3-\mat{P})\vect{x}_0
  +\mat{T}\e^{t\mat{S}}\mat{W}\vect{x}_0.
  \label{eq:ode-example-general-solution}
\end{equation}

\section{Shifted systems, forcing, and stability}
\label{sec:shifted-stability}

A pure low-rank generator \(\mat{A}=\mat{T}\mat{S}\mat{W}\) has at least \(m-n\) zero eigenvalues. Therefore the origin cannot be asymptotically stable for
\(\vect{x}'=\mat{A}\vect{x}\) when \(m>n\). In applications, the low-rank interaction often appears together with full-space decay:
\begin{equation}
  \vect{x}'
  =(-\gamma\I_m+\mat{T}\mat{S}\mat{W})\vect{x},
  \qquad \gamma>0.
  \label{eq:shifted-ode}
\end{equation}

\begin{theorem}[Spectrum of a shifted low-rank system]
\label{thm:shifted-spectrum}
Let
\[
  \mat{J}=-\gamma\I_m+\mat{A}.
\]
Then
\begin{equation}
  \det(\lambda\I_m-\mat{J})
  =
  (\lambda+\gamma)^{m-n}
  \det\!\left((\lambda+\gamma)\I_n-\mat{B}\right).
  \label{eq:shifted-characteristic}
\end{equation}
Hence the spectrum consists of
\begin{equation}
  -\gamma
  \quad\text{with multiplicity at least }m-n,
  \label{eq:shifted-inactive-eigs}
\end{equation}
and
\begin{equation}
  -\gamma+\lambda_j(\mat{B}),
  \qquad j=1,\dots,n,
  \label{eq:shifted-active-eigs}
\end{equation}
with multiplicities inherited from \(\mat{B}\). The two lists overlap when \(\mat{B}\) is singular, since each zero eigenvalue of \(\mat{B}\) contributes a further copy of \(-\gamma\). The exact multiplicity of \(-\gamma\) is \(m-n\) plus the algebraic multiplicity of zero in \(\mat{B}\).
\end{theorem}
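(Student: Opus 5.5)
The plan is to reduce everything to \cref{thm:three-factor} by a change of spectral variable. Since \(\mat{J}=-\gamma\I_m+\mat{A}\), we have
\[
  \lambda\I_m-\mat{J}=(\lambda+\gamma)\I_m-\mat{A}.
\]
So the characteristic polynomial of \(\mat{J}\) at \(\lambda\) is the characteristic polynomial of \(\mat{A}\) evaluated at \(\nu=\lambda+\gamma\). Applying \cref{eq:three-factor-characteristic} with \(\lambda\) replaced by \(\nu\) gives
\[
  \det(\lambda\I_m-\mat{J})=\det(\nu\I_m-\mat{A})=\nu^{m-n}\det(\nu\I_n-\mat{B}),
\]
and substituting \(\nu=\lambda+\gamma\) yields \cref{eq:shifted-characteristic}. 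This is an identity of polynomials in \(\lambda\), since \cref{eq:three-factor-characteristic} holds for every \(\lambda\) (the zero case was already settled in \cref{cor:rectangular-characteristic}). No extra continuity argument is needed at \(\lambda=-\gamma\).

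Next I read off the spectrum from the factorization. Write \(p_{\mat{B}}(\nu)=\prod_{j=1}^n(\nu-\lambda_j(\mat{B}))\), listing the eigenvalues of \(\mat{B}\) with algebraic multiplicity. Then
\[
  \det(\lambda\I_m-\mat{J})=(\lambda+\gamma)^{m-n}\prod_{j=1}^n\bigl(\lambda-(-\gamma+\lambda_j(\mat{B}))\bigr).
\]
The roots of this polynomial, with multiplicity, are therefore \(-\gamma\) repeated \(m-n\) times together with \(-\gamma+\lambda_j(\mat{B})\) for \(j=1,\dots,n\). This gives \cref{eq:shifted-inactive-eigs} and \cref{eq:shifted-active-eigs}, with multiplicities inherited from \(\mat{B}\) because the shift \(\nu\mapsto\nu-\gamma\) is a bijection on roots that preserves multiplicity.

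The only step needing care is the exact multiplicity of \(-\gamma\). A factor \(\lambda-(-\gamma+\lambda_j(\mat{B}))\) equals \(\lambda+\gamma\) exactly when \(\lambda_j(\mat{B})=0\). If \(k\) denotes the algebraic multiplicity of \(0\) as an eigenvalue of \(\mat{B}\), then \(p_{\mat{B}}(\nu)=\nu^k q(\nu)\) with \(q(0)\neq0\). Hence
\[
  \det(\lambda\I_m-\mat{J})=(\lambda+\gamma)^{m-n+k}\,q(\lambda+\gamma),
\]
and \(q(\lambda+\gamma)\) does not vanish at \(\lambda=-\gamma\). So the algebraic multiplicity of \(-\gamma\) is exactly \(m-n+k\). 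It is at least \(m-n\), with equality if and only if \(\mat{B}\) is nonsingular. By \cref{cor:B-C-spectrum}, \(k\) is the same whether computed from \(\mat{B}\) or \(\mat{C}\), so \(\mat{C}\) could replace \(\mat{B}\) throughout.

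I expect no real obstacle here: the whole argument is a substitution into \cref{thm:three-factor}. The one point worth stating explicitly is that the polynomial identity holds at \(\lambda=-\gamma\) itself. That is what lets us count the multiplicity of \(-\gamma\) from the factored form rather than only the nonzero-shift eigenvalues.
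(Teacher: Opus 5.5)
Your proposal is correct and follows the paper's own proof, which is exactly the substitution of \(\lambda+\gamma\) for \(\lambda\) in \cref{eq:three-factor-characteristic}. The paper states the exact multiplicity of \(-\gamma\) without writing out the count, and your factorization \(p_{\mat{B}}(\nu)=\nu^{k}q(\nu)\) with \(q(0)\neq0\) supplies that step.
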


\begin{proof}
Use \cref{eq:three-factor-characteristic} with \(\lambda+\gamma\) in place of \(\lambda\):
\begin{align*}
  \det(\lambda\I_m-\mat{J})
  &=\det((\lambda+\gamma)\I_m-\mat{A})\\
  &=(\lambda+\gamma)^{m-n}
    \det((\lambda+\gamma)\I_n-\mat{B}).
\end{align*}
\end{proof}

\begin{corollary}[Stability criterion]
\label{cor:stability-criterion}
Assume \(\gamma>0\). The system in \cref{eq:shifted-ode} is asymptotically stable if and only if
\begin{equation}
  \max_{1\leq j\leq n}
  \RePart\lambda_j(\mat{B})<\gamma.
  \label{eq:stability-criterion}
\end{equation}
\end{corollary}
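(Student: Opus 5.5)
The plan is to reduce the question to the standard eigenvalue test for linear constant-coefficient systems, and then read off the spectrum of \(\mat{J}=-\gamma\I_m+\mat{A}\) from \cref{thm:shifted-spectrum}. The classical fact I would invoke is this: the origin of \(\vect{x}'=\mat{J}\vect{x}\) is asymptotically stable if and only if every eigenvalue of \(\mat{J}\) has negative real part. Sufficiency follows from the Jordan form, since each entry of \(\e^{t\mat{J}}\) is a sum of terms \(t^k\e^{\lambda t}\). Necessity follows from an eigenvector \(\vect{v}\) with \(\RePart\lambda\geq0\), because the solution \(\e^{\lambda t}\vect{v}\) does not decay. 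I would state this as the standard criterion, with \(\e^{t\mat{J}}\) read as the solution operator of \cref{eq:shifted-ode}, rather than reprove it.

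Next I would apply \cref{thm:shifted-spectrum}. By \cref{eq:shifted-characteristic}, the eigenvalues of \(\mat{J}\), counted with multiplicity, are \(-\gamma\) with multiplicity \(m-n\) together with \(-\gamma+\lambda_j(\mat{B})\) for \(j=1,\dots,n\). This list contains every root of the characteristic polynomial and nothing else.

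Then I would check the two families separately. The inactive eigenvalues \(-\gamma\) have real part \(-\gamma<0\), because \(\gamma>0\) is assumed. They therefore never obstruct stability, and this is the only place the hypothesis \(\gamma>0\) enters. It matters when \(m>n\); when \(m=n\) the family is empty and the argument is unaffected. The active eigenvalues satisfy \(\RePart(-\gamma+\lambda_j(\mat{B}))=\RePart\lambda_j(\mat{B})-\gamma\), which is negative for all \(j\) exactly when \cref{eq:stability-criterion} holds. Combining the two families, \(\mat{J}\) is Hurwitz if and only if \(\max_j\RePart\lambda_j(\mat{B})<\gamma\). Both directions of the equivalence follow from this.

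I expect no real obstacle, since the spectral bookkeeping was already done in \cref{thm:shifted-spectrum}. The only point needing care is the overlap noted there: a zero eigenvalue of \(\mat{B}\) produces an extra copy of \(-\gamma\), possibly in a nontrivial Jordan block. Its real part is still \(-\gamma<0\), and the eigenvalue criterion depends only on real parts, not on Jordan structure, so the case requires no separate treatment. I would add a sentence noting that non-diagonalizability of \(\mat{J}\) affects transient growth but not asymptotic stability.
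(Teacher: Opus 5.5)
Your proposal is correct and follows the paper's proof. You read the spectrum of \(\mat{J}\) off \cref{thm:shifted-spectrum}, note that the copies of \(-\gamma\) are harmless because \(\gamma>0\), and observe that \(\RePart(-\gamma+\lambda_j(\mat{B}))<0\) for all \(j\) is exactly \cref{eq:stability-criterion}. You also state explicitly the standard eigenvalue criterion for asymptotic stability, which the paper uses without stating it; your remarks on the \(m=n\) case and on Jordan structure at \(-\gamma\) are accurate but not needed.
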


\begin{proof}
By \cref{thm:shifted-spectrum} the eigenvalues of \(\mat{J}\) are \(-\gamma\) together with \(-\gamma+\lambda_j(\mat{B})\). Because \(\gamma>0\), the first has negative real part. The others do exactly when \(\RePart\lambda_j(\mat{B})<\gamma\) for every \(j\).
\end{proof}

The criterion reduces an \(m\)-dimensional stability test to an \(n\)-dimensional eigenvalue calculation. It settles asymptotic behavior as \(t\to\infty\) and nothing beyond that. \Cref{sec:limitations} returns to what eigenvalues leave undetermined about transient growth.

\begin{example}[Stability from the reduced matrix]
Use \(\mat{B}\) from \cref{eq:example1-B}, whose eigenvalues are \(-2\) and \(-6\), and take \(\gamma=1\). Then the full shifted matrix
\[
  \mat{J}=-\I_3+\mat{A}
\]
has eigenvalues
\[
  -1,\qquad -3,\qquad -7.
\]
The eigenvalue \(-1\) comes from the inactive dimension; the other two are obtained by shifting the reduced eigenvalues by \(-1\).
\end{example}

\subsection{Forced systems}

Consider
\begin{equation}
  \vect{x}'
  =(-\gamma\I_m+\mat{A})\vect{x}+\vect{f}(t).
  \label{eq:forced-full-ode}
\end{equation}
The extracted variable \(\vect{q}=\mat{W}\vect{x}\) satisfies
\begin{equation}
  \vect{q}'
  =(-\gamma\I_n+\mat{B})\vect{q}
   +\mat{W}\vect{f}(t).
  \label{eq:forced-reduced-q}
\end{equation}
This equation is exact for every forcing function. Once \(\vect{q}\) is found, the full state satisfies
\begin{equation}
  \vect{x}'
  =-\gamma\vect{x}
   +\mat{T}\mat{S}\vect{q}
   +\vect{f}(t).
  \label{eq:forced-reconstruct}
\end{equation}
Suppose \(\mat{T}\) has full column rank, as in the lifted-coordinate reduction of \cref{sec:ode-reduction}, so that \(\mat{T}\) is injective. If the forcing also lies in the lifted subspace, \(\vect{f}(t)=\mat{T}\vect{h}(t)\), and the initial condition belongs to \(\range(\mat{T})\), then cancelling \(\mat{T}\) is legitimate and the lifted coordinates satisfy
\begin{equation}
  \vect{y}'
  =(-\gamma\I_n+\mat{C})\vect{y}+\vect{h}(t).
  \label{eq:forced-reduced-y}
\end{equation}

\section{From exact factorization to projection-based model reduction}
\label{sec:projection}

The preceding reductions are exact because the full matrix is assumed to factor through an \(n\)-dimensional space. Most model-reduction problems begin differently: a general large matrix is given, and one seeks a useful low-dimensional approximation. Projection methods provide the bridge \parencite{antoulas2005approximation,benner2015survey,quarteroni2016reduced}.

\subsection{Trial and test spaces}

Consider the full system
\begin{equation}
  \vect{x}'=\mat{L}\vect{x},
  \qquad
  \mat{L}\in\F^{m\times m}.
  \label{eq:general-full-ode}
\end{equation}
Choose a trial basis
\begin{equation}
  \mat{T}\in\F^{m\times n},
  \qquad
  \rank(\mat{T})=n,
  \label{eq:trial-basis}
\end{equation}
and approximate
\begin{equation}
  \vect{x}(t)\approx\mat{T}\vect{y}(t).
  \label{eq:trial-approximation}
\end{equation}
Choose a test map
\begin{equation}
  \mat{W}\in\F^{n\times m}
  \label{eq:test-map}
\end{equation}
that is usually normalized so that
\begin{equation}
  \mat{W}\mat{T}=\I_n.
  \label{eq:petrov-biorthogonality}
\end{equation}
The residual of the approximation is
\begin{equation}
  \vect{r}
  =\mat{T}\vect{y}'-\mat{L}\mat{T}\vect{y}.
  \label{eq:projection-residual}
\end{equation}
A Petrov--Galerkin condition requires
\begin{equation}
  \mat{W}\vect{r}=\vect{0}.
  \label{eq:petrov-condition}
\end{equation}
Using \cref{eq:petrov-biorthogonality} gives
\begin{equation}
  \vect{y}'=\mat{L}_r\vect{y},
  \qquad
  \mat{L}_r=\mat{W}\mat{L}\mat{T}.
  \label{eq:projected-operator}
\end{equation}

If \(\mat{T}\) has orthonormal columns and \(\mat{W}=\mat{T}^{*}\), this is a Galerkin projection. More general choices yield Petrov--Galerkin methods \parencite{quarteroni2016reduced,benner2015survey}.

\subsection{The lifted reduced operator}

The reduced matrix acts in \(\F^n\). To compare it with a full-space operator, define
\begin{equation}
  \widetilde{\mat{L}}
  =\mat{T}\mat{L}_r\mat{W}.
  \label{eq:lifted-reduced-operator}
\end{equation}
This matrix has the exact factor form studied earlier, with \(\mat{S}=\mat{L}_r\). Because \(\mat{W}\mat{T}=\I_n\), Sylvester's theorem gives
\begin{equation}
  \det(\lambda\I_m-\widetilde{\mat{L}})
  =\lambda^{m-n}
   \det(\lambda\I_n-\mat{L}_r).
  \label{eq:lifted-reduced-spectrum}
\end{equation}
The nonzero eigenvalues of the lifted approximation are exactly the nonzero eigenvalues of the reduced matrix, with the same multiplicities. If \(\mat{L}_r\) is singular its zero eigenvalue is not among them.

The statement is useful, but it is easy to over-read:
\begin{equation}
  \spec_{\neq0}(\widetilde{\mat{L}})
  =\spec_{\neq0}(\mat{L}_r)
  \label{eq:exact-lifted-spectrum}
\end{equation}
is an exact algebraic identity, whereas
\begin{equation}
  \spec(\mat{L}_r)
  \approx\spec(\mat{L})
  \label{eq:approx-full-spectrum}
\end{equation}
is an approximation claim that requires justification. The first is free: it holds for any \(\mat{T}\) and \(\mat{W}\) with \(\mat{W}\mat{T}=\I_n\), whatever subspace they happen to describe. The second carries the entire modeling risk, and Sylvester's theorem says nothing about it.

\subsection{When the reduction is exact}

\begin{theorem}[Exact invariant-subspace reduction]
\label{thm:invariant-subspace-reduction}
Suppose \(\mat{W}\mat{T}=\I_n\) and there is a matrix \(\mat{S}\in\F^{n\times n}\) such that
\begin{equation}
  \mat{L}\mat{T}=\mat{T}\mat{S}.
  \label{eq:invariance-condition}
\end{equation}
Then
\begin{equation}
  \mat{W}\mat{L}\mat{T}=\mat{S}.
  \label{eq:projected-S}
\end{equation}
For every initial condition \(\vect{x}_0=\mat{T}\vect{y}_0\), the full solution of
\(\vect{x}'=\mat{L}\vect{x}\) is
\begin{equation}
  \vect{x}(t)=\mat{T}\e^{t\mat{S}}\vect{y}_0.
  \label{eq:invariant-exact-solution}
\end{equation}
\end{theorem}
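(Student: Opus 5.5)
The statement has two parts, and both come down to short algebra once the invariance condition \cref{eq:invariance-condition} is used.

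For \cref{eq:projected-S}, I will multiply \cref{eq:invariance-condition} on the left by \(\mat{W}\). This gives \(\mat{W}\mat{L}\mat{T}=(\mat{W}\mat{T})\mat{S}\). The biorthogonality \(\mat{W}\mat{T}=\I_n\) then collapses the right side to \(\mat{S}\). In particular, the Petrov--Galerkin operator \(\mat{L}_r\) of \cref{eq:projected-operator} coincides with \(\mat{S}\), so the reduced model built by projection is exactly the one that \cref{eq:invariance-condition} describes.

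For the trajectory formula \cref{eq:invariant-exact-solution}, the cleanest route is to verify it directly and then appeal to uniqueness for linear constant-coefficient ODEs. Set \(\vect{x}(t)=\mat{T}\e^{t\mat{S}}\vect{y}_0\). At \(t=0\) this is \(\mat{T}\vect{y}_0=\vect{x}_0\). Differentiating gives \(\vect{x}'(t)=\mat{T}\mat{S}\e^{t\mat{S}}\vect{y}_0\). By \cref{eq:invariance-condition} this equals \(\mat{L}\mat{T}\e^{t\mat{S}}\vect{y}_0=\mat{L}\vect{x}(t)\). Since the solution \(\e^{t\mat{L}}\vect{x}_0\) is unique, the two agree.

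As an alternative that mirrors \cref{prop:powers}, I could prove by induction that \(\mat{L}^k\mat{T}=\mat{T}\mat{S}^k\) and then sum the exponential series. Termwise this gives \(\e^{t\mat{L}}\mat{T}=\mat{T}\e^{t\mat{S}}\), and applying both sides to \(\vect{y}_0\) yields the formula.

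I do not expect a real obstacle. The only point needing care is to note that the trajectory step uses only \cref{eq:invariance-condition}, not \(\mat{W}\mat{T}=\I_n\) or injectivity of \(\mat{T}\). The biorthogonality is needed only to identify \(\mat{S}\) with \(\mat{W}\mat{L}\mat{T}\). I will remark on this so the reader sees which hypothesis carries which conclusion.
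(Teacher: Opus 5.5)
Your proposal is correct and matches the paper's proof essentially step for step. You left-multiply \(\mat{L}\mat{T}=\mat{T}\mat{S}\) by \(\mat{W}\) and use \(\mat{W}\mat{T}=\I_n\); then you check that \(\mat{T}\e^{t\mat{S}}\vect{y}_0\) has the right initial value and satisfies the ODE via the invariance condition, and conclude by uniqueness. Your remark that only the invariance condition is needed for the trajectory formula is accurate, and so is the alternative series argument through \(\mat{L}^k\mat{T}=\mat{T}\mat{S}^k\).
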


\begin{proof}
The proof has two steps. The first is a short computation. The second constructs a
candidate solution, verifies that it satisfies the same initial-value problem as
\(\vect{x}\), and then appeals to uniqueness. That pattern is worth recognizing: it
identifies \(\vect{x}\) without ever integrating the full system.

Multiplying \cref{eq:invariance-condition} by \(\mat{W}\) gives
\[
  \mat{W}\mat{L}\mat{T}
  =\mat{W}\mat{T}\mat{S}
  =\mat{S}.
\]
If \(\vect{x}_0=\mat{T}\vect{y}_0\), set
\[
  \vect{z}(t)=\mat{T}\e^{t\mat{S}}\vect{y}_0.
\]
Then \(\vect{z}(0)=\mat{T}\vect{y}_0=\vect{x}_0\), and
\[
  \vect{z}'(t)
  =\mat{T}\mat{S}\e^{t\mat{S}}\vect{y}_0
  =\mat{L}\mat{T}\e^{t\mat{S}}\vect{y}_0
  =\mat{L}\vect{z}(t),
\]
where the middle step is \cref{eq:invariance-condition}. So \(\vect{z}\) solves the same initial-value problem as \(\vect{x}\), and the solution of a linear initial-value problem is unique, giving \cref{eq:invariant-exact-solution}.
\end{proof}

\subsection{Measuring the defect of an approximate subspace}

For a projected matrix
\[
  \mat{L}_r=\mat{W}\mat{L}\mat{T},
\]
define the invariance residual
\begin{equation}
  \mat{R}=\mat{L}\mat{T}-\mat{T}\mat{L}_r.
  \label{eq:invariance-residual}
\end{equation}
If \(\mat{R}=\mat{0}\), the trial space is invariant and the reduction is exact on that space. If \(\norm{\mat{R}}\) is small, the trial space is nearly invariant in the chosen norm. This residual is one practical check on a spectral reduced model, though trajectory and output accuracy may require more specialized error estimates.

\begin{example}[A projection that preserves the algebra and loses the dynamics]
\label{ex:projection-failure}
Keep the trial and test maps of \cref{ex:ode-three-to-two},
\[
  \mat{T}=
  \begin{pmatrix}
    1&0\\
    0&1\\
    1&1
  \end{pmatrix},
  \qquad
  \mat{W}=
  \begin{pmatrix}
    1&0&0\\
    0&1&0
  \end{pmatrix},
  \qquad
  \mat{W}\mat{T}=\I_2,
\]
and replace the exact factorization by a general operator that does not factor through \(\range(\mat{T})\):
\begin{equation}
  \mat{L}=
  \begin{pmatrix}
    -1&-1&-1\\
     0&-2&-1\\
     0& 0& 1
  \end{pmatrix},
  \qquad
  \spec(\mat{L})=\{-1,-2,1\}.
  \label{eq:projection-example-L}
\end{equation}
The matrix is triangular, so its eigenvalues are the diagonal entries, and the full system \(\vect{x}'=\mat{L}\vect{x}\) has one growing mode.

The projected matrix and its characteristic polynomial are
\begin{equation}
  \mat{L}_r
  =\mat{W}\mat{L}\mat{T}
  =
  \begin{pmatrix}
    -2&-2\\
    -1&-3
  \end{pmatrix},
  \qquad
  \det(\lambda\I_2-\mat{L}_r)
  =(\lambda+1)(\lambda+4).
  \label{eq:projection-example-Lr}
\end{equation}
Lifting the reduced matrix back into the full space gives
\begin{equation}
  \widetilde{\mat{L}}
  =\mat{T}\mat{L}_r\mat{W}
  =
  \begin{pmatrix}
    -2&-2&0\\
    -1&-3&0\\
    -3&-5&0
  \end{pmatrix},
  \qquad
  \det(\lambda\I_3-\widetilde{\mat{L}})
  =\lambda(\lambda+1)(\lambda+4),
  \label{eq:projection-example-lifted}
\end{equation}
which is \cref{eq:lifted-reduced-spectrum} with \(m-n=1\). The algebra delivers what it promised.

The modeling claim does not. Against \(\spec(\mat{L})=\{-1,-2,1\}\), the reduced matrix keeps one eigenvalue, introduces \(-4\), and loses \(1\) altogether. A reader who stopped at \cref{eq:exact-lifted-spectrum} would conclude that every mode decays, when one of them grows. The habit built in \cref{sec:shifted-stability}, where a reduced spectrum settles asymptotic stability, was earned there by an exact factorization and is not available here.

The invariance residual records the gap. From \cref{eq:invariance-residual},
\begin{equation}
  \mat{R}
  =\mat{L}\mat{T}-\mat{T}\mat{L}_r
  =
  \begin{pmatrix}
    0&0\\
    0&0\\
    4&6
  \end{pmatrix},
  \qquad
  \norm{\mat{R}}_{F}=2\sqrt{13},
  \label{eq:projection-example-residual}
\end{equation}
of rank one and supported entirely in the third component. The eigenvector of \(\mat{L}\) for \(\lambda=1\) is a multiple of \((-1,-1,3)^{\mathsf T}\), while \(\range(\mat{T})\) is the plane \(x_3=x_1+x_2\). The residual points at the one direction the trial space cannot represent.

Changing only the trial space repairs the reduction. With
\[
  \mat{T}'=
  \begin{pmatrix}
    1&0\\
    0&1\\
    0&0
  \end{pmatrix},
  \qquad
  \mat{W}\mat{T}'=\I_2,
\]
the same construction gives
\begin{equation}
  \mat{L}_r'
  =\mat{W}\mat{L}\mat{T}'
  =
  \begin{pmatrix}
    -1&-1\\
     0&-2
  \end{pmatrix},
  \qquad
  \mat{R}'=\mat{0},
  \label{eq:projection-example-good}
\end{equation}
with \(\spec(\mat{L}_r')=\{-1,-2\}\), two exact eigenvalues of \(\mat{L}\). \Cref{eq:invariance-condition} holds, so \cref{thm:invariant-subspace-reduction} applies verbatim. The theorem was the same in both calculations. What changed was the subspace.

One caution survives the repair. The corrected trial space is invariant, and it still omits the growing mode, which lies outside \(\range(\mat{T}')\). Exactness on a subspace is a statement about that subspace, not about the behavior a reduced model was built to capture.
\end{example}

\subsection{How reduced spaces are chosen}

The theorem does not choose \(\mat{T}\) and \(\mat{W}\). Common strategies include:
\begin{itemize}
  \item eigenvectors or invariant subspaces associated with important modes \parencite{golub2013matrix};
  \item proper orthogonal decomposition from state snapshots \parencite{benner2015survey};
  \item Krylov subspaces designed to match moments or transfer functions \parencite{antoulas2005approximation};
  \item balanced truncation based on controllability and observability \parencite{antoulas2005approximation};
  \item reduced basis methods for parameter-dependent equations \parencite{quarteroni2016reduced}.
\end{itemize}
These methods address the modeling question: which low-dimensional subspace retains the behavior of interest? Sylvester's theorem addresses a different algebraic question: once an operator is written in lifted reduced form, how are its nonzero eigenvalues related to those of the reduced matrix?

\section{Computational workflow}
\label{sec:computational-workflow}

A practical implementation should preserve the factorization rather than assemble \(\mat{A}\).

\subsection{Reduced spectral analysis}

Given \(\mat{T}\), \(\mat{S}\), and \(\mat{W}\):
\begin{enumerate}[label=\arabic*.]
  \item Form
  \[
    \mat{M}=\mat{W}\mat{T}.
  \]
  \item Form either
  \[
    \mat{B}=\mat{M}\mat{S}
    \quad\text{or}\quad
    \mat{C}=\mat{S}\mat{M}.
  \]
  \item Compute the eigenvalues of the \(n\times n\) reduced matrix.
  \item Interpret these \(n\) values as the eigenvalues of \(\mat{A}\) inherited from \(\mat{B}\), and append exactly \(m-n\) further zeros for the inactive dimensions. The total multiplicity of the zero eigenvalue is \(m-n\) plus the algebraic multiplicity of zero in \(\mat{B}\).
  \item Reconstruct selected full eigenvectors using \(\mat{T}\vect{v}\) from eigenvectors of \(\mat{C}\), or \(\mat{T}\mat{S}\vect{u}\) from eigenvectors of \(\mat{B}\).
  \item Check the full residual without forming \(\mat{A}\):
  \begin{equation}
    \rho
    =
    \frac{
      \norm{\mat{T}\mat{S}(\mat{W}\vect{x})-\lambda\vect{x}}
    }{
      \norm{\vect{x}}
    }.
    \label{eq:eigen-residual}
  \end{equation}
\end{enumerate}

\subsection{Cost}

For dense factors, forming \(\mat{M}=\mat{W}\mat{T}\) costs approximately \(O(mn^2)\), and the reduced eigenvalue problem costs \(O(n^3)\). Storing the factors costs \(O(mn+n^2)\), whereas storing a dense full matrix costs \(O(m^2)\). Applying \(\mat{A}\) to one vector costs \(O(mn+n^2)\). These estimates explain why the factor form is attractive when \(m\gg n\).

A numerical implementation should still consider scaling and conditioning. Forming \(\mat{W}\mat{T}\) may lose accuracy when the columns of \(\mat{T}\) or rows of \(\mat{W}\) are poorly conditioned. In projection methods, orthonormal or biorthogonal bases are often preferred for this reason \parencite{golub2013matrix,trefethen1997numerical}. For conditioning questions the two are not interchangeable, as \cref{sec:singular-values} shows.

\subsection{MATLAB implementation}

The following function computes the reduced eigenvalues and reconstructs full eigenvectors from \(\mat{C}\).

\begin{lstlisting}[language=Matlab,caption={Reduced spectral analysis in MATLAB.},label={lst:matlab}]
function [lambda, X, residual] = reduced_spectrum(T, S, W)
%REDUCED_SPECTRUM Spectral analysis of A = T*S*W without forming A.

    [m, n] = size(T);
    if ~isequal(size(S), [n, n]) || ~isequal(size(W), [n, m])
        error('Incompatible dimensions for T, S, and W.');
    end

    M = W*T;
    C = S*M;
    [V, D] = eig(C);
    lambda = diag(D);
    X = T*V;

    residual = zeros(n, 1);
    for j = 1:n
        x = X(:, j);
        if norm(x) <= eps*norm(T, 'fro')*norm(V(:, j))
            residual(j) = NaN;
        else
            Ax = T*(S*(W*x));
            residual(j) = norm(Ax - lambda(j)*x)/norm(x);
        end
    end
end
\end{lstlisting}

Only eigenvectors associated with nonzero eigenvalues are guaranteed to lift to nonzero full vectors. If \(\mat{C}\) has a zero eigenvalue, the corresponding column of \(\mat{T}\mat{V}\) must be checked separately. The residual is deliberately computed in the full space. A reduced eigenvalue problem can be solved to high accuracy while the lifted vector remains a poor eigenvector of \(\mat{A}\), and by \cref{sec:singular-values} the reduced matrices cannot detect that on their own.

\clearpage
\subsection{Python implementation}

The same computation in NumPy. The two listings are interchangeable, so a reader may work from whichever is closer to hand.

\begin{lstlisting}[language=Python,caption={Reduced spectral analysis with NumPy.},label={lst:python}]
import numpy as np


def reduced_spectrum(T: np.ndarray,
                     S: np.ndarray,
                     W: np.ndarray
                     ) -> tuple[np.ndarray, np.ndarray, np.ndarray]:
    """Analyze A = T @ S @ W without forming the full matrix A."""
    m, n = T.shape
    if S.shape != (n, n) or W.shape != (n, m):
        raise ValueError("Incompatible dimensions for T, S, and W")

    M = W @ T
    C = S @ M
    eigenvalues, V = np.linalg.eig(C)
    X = T @ V

    residuals = np.empty(n, dtype=float)
    for j in range(n):
        x = X[:, j]
        nx = np.linalg.norm(x)
        # np.linalg.norm defaults to the Frobenius norm on a 2-D array
        # and to the 2-norm on a 1-D array, matching norm(T,'fro')
        # and norm(V(:,j)) in the MATLAB listing above.
        if nx <= np.finfo(float).eps*np.linalg.norm(T)*np.linalg.norm(V[:, j]):
            residuals[j] = np.nan
        else:
            Ax = T @ (S @ (W @ x))
            residuals[j] = np.linalg.norm(
                Ax - eigenvalues[j] * x
            ) / nx

    return eigenvalues, X, residuals
\end{lstlisting}

Both listings use only core language features. Neither requires a MATLAB toolbox or any package beyond NumPy, and neither reads external data: the matrices \(\mat{T}\), \(\mat{S}\), and \(\mat{W}\) are supplied by the caller, so the listings are reproducible as printed.

\section{Limitations and cautions}
\label{sec:limitations}

Sylvester's theorem is exact, but the conclusions it supports are specific. Several common overextensions should be avoided.

\subsection{The zero eigenvalue is not fully described}

The nonzero characteristic factors agree exactly. At those eigenvalues more agrees than the characteristic polynomial records. Applying \textcite[Theorem 3.2.11.1]{horn2013matrix} to the two groupings of \cref{sec:three-factors} shows that \(\mat{A}\), \(\mat{B}\), and \(\mat{C}\) carry the same number of Jordan blocks of each size at every nonzero eigenvalue. The zero eigenvalue may have different geometric multiplicity, nilpotent structure, and Jordan block sizes. If long-time behavior depends on generalized zero modes, the reduced characteristic polynomial alone is insufficient. The work of \textcite{flanders1951elementary} gives the classical refined description.

\subsection{Eigenvalues are not singular values}

The singular values of \(\mat{X}\mat{Y}\) and \(\mat{Y}\mat{X}\) generally differ, so the theorem does not preserve operator norms, condition numbers, energy amplification, or least-squares sensitivity. \Cref{sec:singular-values} develops this point in full, including the one case in which singular values do transfer.

\subsection{Equal eigenvalues do not imply equal transient behavior}

A nonnormal matrix can exhibit substantial transient growth even when every eigenvalue lies in the left half-plane. Two matrices with the same eigenvalues can have very different eigenvectors, pseudospectra, and short-time amplification. A stability conclusion based on eigenvalues should therefore be separated from a claim about transient response. Because \(\mat{A}\), \(\mat{B}\), and \(\mat{C}\) need not share singular values, the reduced matrices carry no guarantee here either. Standard numerical linear algebra references discuss conditioning and nonnormality in greater depth \parencite{trefethen1997numerical,golub2013matrix}.

\subsection{Lifting can amplify errors}

Suppose an approximate reduced eigenvector \(\widehat{\vect{v}}\) is lifted as
\(\widehat{\vect{x}}=\mat{T}\widehat{\vect{v}}\). If \(\norm{\mat{T}}\) is large or \(\mat{T}\) is poorly conditioned, small reduced errors may produce large full-space errors. Similarly, an extraction map \(\mat{W}\) of large norm can magnify measurement or rounding errors. Conditioning is the relevant quantity when a map is inverted, and norm is the relevant one for direct amplification.

\subsection{Rank deficiency can reduce the active dimension further}

Although \(n\) is the nominal reduced dimension,
\begin{equation}
  \rank(\mat{A})
  \leq
  \min\{\rank(\mat{T}),\rank(\mat{S}),\rank(\mat{W})\}.
  \label{eq:rank-further}
\end{equation}
If any factor is rank deficient, the true active dimension may be smaller than \(n\), and \(\mat{B}\) and \(\mat{C}\) will have additional zero eigenvalues.

\subsection{A reduced model must be judged by its purpose}

In an approximate projection method, preserving selected eigenvalues may be important, but it may not be the primary objective. A reduced model might instead be designed to reproduce an output, a transfer function, a time interval, a parameter range, or a conserved quantity. Sylvester's theorem explains the internal spectrum of the lifted reduced operator; it does not replace application-specific validation.

\section{Exercises}
\label{sec:exercises}

The exercises are ordered roughly from direct verification to open-ended computation.

\begin{exercise}[Dimensions and rank]
\label{ex:dimensions-rank}
Let \(\mat{T}\in\F^{m\times n}\), \(\mat{S}\in\F^{n\times n}\), and \(\mat{W}\in\F^{n\times m}\).
\begin{enumerate}[label=(\alph*)]
  \item Verify the dimensions of \(\mat{A}\), \(\mat{B}\), and \(\mat{C}\).
  \item Prove \(\rank(\mat{A})\leq n\).
  \item Prove the two inclusions in \cref{eq:range-null-inclusions}.
\end{enumerate}
\end{exercise}

\begin{exercise}[A proof by block elimination]
\label{ex:block-proof}
Starting with the block matrix in \cref{eq:block-K}, multiply it on the left or right by suitable block triangular matrices with determinant one. Derive \cref{eq:sylvester-determinant} without explicitly invoking the Schur-complement formulas.
\end{exercise}

\begin{exercise}[Eigenvector transfer]
\label{ex:eigenvector-transfer}
Let \(\mat{X}\in\F^{m\times n}\) and \(\mat{Y}\in\F^{n\times m}\). Prove that the maps
\[
  \vect{v}\mapsto\mat{X}\vect{v},
  \qquad
  \vect{u}\mapsto\frac{1}{\lambda}\mat{Y}\vect{u}
\]
are inverse isomorphisms between the eigenspaces of \(\mat{Y}\mat{X}\) and \(\mat{X}\mat{Y}\) associated with a fixed nonzero eigenvalue \(\lambda\).
\end{exercise}

\begin{exercise}[The worked matrix example]
\label{ex:verify-worked}
For the matrices in \cref{eq:example1-factors}:
\begin{enumerate}[label=(\alph*)]
  \item compute \(\mat{M}\), \(\mat{B}\), \(\mat{C}\), and \(\mat{A}\);
  \item verify \cref{eq:example1-charA,eq:example1-charBC};
  \item find bases for the eigenspaces of \(\mat{A}\);
  \item identify which eigenvectors lie in \(\range(\mat{T})\) and which lie in \(\nullsp(\mat{W})\).
\end{enumerate}
\end{exercise}

\begin{exercise}[Similarity of the reduced matrices]
\label{ex:similarity}
Prove \cref{prop:B-C-similar}. Give an example in which both \(\mat{S}\) and \(\mat{W}\mat{T}\) are singular and \(\mat{B}\) and \(\mat{C}\) have the same characteristic polynomial but are not similar.
\end{exercise}

\begin{exercise}[Powers]
\label{ex:powers}
Prove \cref{prop:powers} directly for \(k=2\) and \(k=3\), and then give an induction proof. Use the result to derive a formula for
\[
  p(\mat{A})\vect{x}
\]
when \(p\) is a polynomial.
\end{exercise}

\begin{exercise}[A shifted solve]
\label{ex:shifted-solve-exercise}
Repeat \cref{ex:shifted-solve} with
\[
  \mu=2,
  \qquad
  \vect{b}=
  \begin{pmatrix}0\\1\\1\end{pmatrix}.
\]
Solve the reduced system, reconstruct \(\vect{x}\), and verify the answer by direct multiplication.
\end{exercise}

\begin{exercise}[Matrix exponential]
\label{ex:matrix-exponential}
Starting from the power series for \(\e^{t\mat{A}}\), prove \cref{eq:exponential-reduction}. Show that differentiating the formula gives
\[
  \frac{\dd}{\dd t}\e^{t\mat{A}}
  =\mat{A}\e^{t\mat{A}}.
\]
\end{exercise}

\begin{exercise}[Observable reduction]
\label{ex:observable-reduction}
For the full ODE \(\vect{x}'=\mat{A}\vect{x}\), define \(\vect{q}=\mat{W}\vect{x}\).
\begin{enumerate}[label=(\alph*)]
  \item Derive \(\vect{q}'=\mat{B}\vect{q}\).
  \item Show that \(\vect{x}(t)\) can be reconstructed from \(\vect{q}(t)\) using \cref{eq:x-reconstruction-integral}.
  \item Explain why this reduction is valid even when \(\vect{x}_0\notin\range(\mat{T})\).
\end{enumerate}
\end{exercise}

\begin{exercise}[Coordinate reduction]
\label{ex:coordinate-reduction}
Assume \(\mat{T}\) has full column rank and \(\vect{x}_0\in\range(\mat{T})\). Prove that the solution remains in \(\range(\mat{T})\) and derive \(\vect{y}'=\mat{C}\vect{y}\). Why is full column rank needed to conclude the reduced differential equation from
\(\mat{T}\vect{y}'=\mat{T}\mat{C}\vect{y}\)?
\end{exercise}

\begin{exercise}[The ODE example]
\label{ex:verify-ode-example}
For \cref{ex:ode-three-to-two}, take \(a=2\) and \(b=-1\).
\begin{enumerate}[label=(\alph*)]
  \item Write the reduced initial-value problem.
  \item Compute \(\vect{y}(t)\) and \(\vect{x}(t)\).
  \item Verify the initial condition and the differential equation.
  \item Determine the limit of \(\vect{x}(t)\) as \(t\to\infty\).
\end{enumerate}
\end{exercise}

\begin{exercise}[Shifted stability]
\label{ex:shifted-stability-exercise}
Let the eigenvalues of \(\mat{B}\) be
\[
  1+2\mathrm{i},\quad 1-2\mathrm{i},\quad -3.
\]
For which positive values of \(\gamma\) is
\[
  \vect{x}'=(-\gamma\I_m+\mat{T}\mat{S}\mat{W})\vect{x}
\]
asymptotically stable?
\end{exercise}

\begin{exercise}[Projection and invariance]
\label{ex:projection-invariance}
Let \(\mat{L}\in\R^{4\times4}\), and let the columns of
\(\mat{T}\in\R^{4\times2}\) be orthonormal. Set
\(\mat{W}=\mat{T}^{\mathsf T}\) and
\(\mat{L}_r=\mat{W}\mat{L}\mat{T}\).
\begin{enumerate}[label=(\alph*)]
  \item Prove that \(\mat{T}\mat{L}_r\mat{W}\) has nonzero eigenvalues equal to the nonzero eigenvalues of \(\mat{L}_r\).
  \item Show that this does not imply that the eigenvalues of \(\mat{L}_r\) are eigenvalues of \(\mat{L}\).
  \item Find a condition under which they are eigenvalues of \(\mat{L}\).
\end{enumerate}
\end{exercise}

\begin{exercise}[Singular values]
\label{ex:singular-values}
Let \(\mat{X}\) and \(\mat{Y}\) be square matrices of the same size.
\begin{enumerate}[label=(\alph*)]
  \item show that \(\mat{X}\mat{Y}\) and \(\mat{Y}\mat{X}\) have the same determinant, so their singular values have the same product, recalling that this product equals \(\lvert\det\rvert\) rather than \(\det\);
  \item find \(2\times2\) matrices \(\mat{X}\) and \(\mat{Y}\) for which \(\mat{X}\mat{Y}\) and \(\mat{Y}\mat{X}\) have the same eigenvalues but condition numbers differing by a factor of at least ten;
  \item explain why part (b) does not contradict Sylvester's theorem.
\end{enumerate}
\end{exercise}

\begin{exercise}[Computational project]
\label{ex:computational-project}
Generate random matrices with \(m=1000\) and \(n=10\). Compare:
\begin{enumerate}[label=(\alph*)]
  \item the time and memory required to form \(\mat{A}=\mat{T}\mat{S}\mat{W}\);
  \item the time to compute the eigenvalues of \(\mat{A}\);
  \item the time to form \(\mat{B}\) and compute its eigenvalues;
  \item the residuals of lifted eigenvectors.
\end{enumerate}
Repeat with increasingly ill-conditioned \(\mat{T}\), and report how the residuals change.
\end{exercise}

\section{Conclusion}
\label{sec:conclusion}

The four questions of \Cref{sec:introduction} now have answers. The nonzero eigenvalues of the large matrix are exactly those of either small matrix (\Cref{sec:sylvester}, \Cref{sec:three-factors}), and the extracted variable \(\mat{W}\vect{x}\) evolves without reference to the full state (\Cref{sec:ode-reduction}). A shifted large system reduces to an \(n\times n\) solve (\Cref{sec:algebraic-systems}). The same algebra describes the lifted operator of a projection method, though not whether that operator approximates the original (\Cref{sec:projection}).

The factorization
\[
  \mat{A}=\mat{T}\mat{S}\mat{W}
\]
reveals that a large operator acts through an \(n\)-dimensional channel. Sylvester's theorem converts this geometric fact into an exact spectral statement:
\[
  \det(\lambda\I_m-\mat{A})
  =\lambda^{m-n}
   \det(\lambda\I_n-\mat{B})
  =\lambda^{m-n}
   \det(\lambda\I_n-\mat{C}).
\]
The two small matrices have complementary meanings. The matrix
\(\mat{B}=(\mat{W}\mat{T})\mat{S}\) governs the extracted observables
\(\vect{q}=\mat{W}\vect{x}\), while
\(\mat{C}=\mat{S}(\mat{W}\mat{T})\) governs coefficients in the lifted subspace
\(\vect{x}=\mat{T}\vect{y}\). The intertwining identities
\[
  \mat{W}\mat{A}=\mat{B}\mat{W},
  \qquad
  \mat{A}\mat{T}=\mat{T}\mat{C}
\]
make these interpretations precise.

For exact factorizations, the reduction is not heuristic. It gives exact nonzero eigenvalues, exact reduced ODEs, exact formulas for powers and analytic matrix functions, and exact low-dimensional solutions of shifted linear systems. What it does not give is equally definite. Singular values, condition numbers, and transient behavior are not determined by the reduced matrices, and have to be established from the factors themselves. For projection-based model reduction, the same theorem explains the relation between a reduced matrix and its lifted full-space representation. The separate question of whether that representation approximates the original full operator must be answered by the quality of the trial and test spaces, residual analysis, and application-specific validation.

Two directions are left open. The fine structure at the zero eigenvalue, described by \textcite{flanders1951elementary}, is not developed here, and neither is a quantitative bound relating the invariance residual of \cref{eq:invariance-residual} to the accuracy of the reduced spectrum. Both are within reach of a reader who has worked the exercises.

The practical principle is simple, and so is its boundary:
\begin{quote}
When a large linear system acts through a small intermediate space, analyze its spectrum and its exact trajectories inside that space, and use the lifting and extraction maps to connect the reduced results to the full variables. Norms, conditioning, and transient growth must be read from the maps themselves.
\end{quote}

\appendix
\crefalias{section}{appendix}
\crefname{appendix}{appendix}{appendices}
\Crefname{appendix}{Appendix}{Appendices}

\section{Selected solutions and hints}
\label{app:solutions}

\subsection*{Solution to \Cref{ex:dimensions-rank}}

The products have dimensions
\[
  \mat{A}: (m\times n)(n\times n)(n\times m)=m\times m,
\]
\[
  \mat{B}: (n\times m)(m\times n)(n\times n)=n\times n,
\]
and
\[
  \mat{C}: (n\times n)(n\times m)(m\times n)=n\times n.
\]
The rank inequality gives
\[
  \rank(\mat{A})
  \leq\rank(\mat{T})
  \leq n.
\]
For every \(\vect{x}\),
\(\mat{A}\vect{x}=\mat{T}(\mat{S}\mat{W}\vect{x})\), so
\(\mat{A}\vect{x}\in\range(\mat{T})\). If \(\mat{W}\vect{x}=\vect{0}\), then
\(\mat{A}\vect{x}=\mat{T}\mat{S}\vect{0}=\vect{0}\).

\subsection*{Hint for \Cref{ex:block-proof}}

Multiply \(\mat{K}\) from \cref{eq:block-K} by
\[
  \begin{pmatrix}
    \I_m&-\mat{X}\\
    \mat{0}&\I_n
  \end{pmatrix}
\]
on the right to obtain a block triangular matrix involving
\(\I_n+\mat{Y}\mat{X}\). Use a different triangular factor to obtain
\(\I_m+\mat{X}\mat{Y}\). Both triangular multipliers have determinant one.

\subsection*{Solution to \Cref{ex:eigenvector-transfer}}

Let \(E_{YX}(\lambda)\) and \(E_{XY}(\lambda)\) denote the eigenspaces. By
\cref{prop:eigenvector-transfer}, \(\mat{X}\) maps
\(E_{YX}(\lambda)\) into \(E_{XY}(\lambda)\). For
\(\vect{v}\in E_{YX}(\lambda)\),
\[
  \frac{1}{\lambda}\mat{Y}(\mat{X}\vect{v})
  =\frac{1}{\lambda}\mat{Y}\mat{X}\vect{v}
  =\vect{v}.
\]
Similarly, for \(\vect{u}\in E_{XY}(\lambda)\),
\[
  \mat{X}\left(\frac{1}{\lambda}\mat{Y}\vect{u}\right)
  =\frac{1}{\lambda}\mat{X}\mat{Y}\vect{u}
  =\vect{u}.
\]
The two maps are inverses.

\subsection*{Solution to \Cref{ex:shifted-solve-exercise}}

For the matrices in \cref{ex:three-by-three},
\[
  \mat{W}\vect{b}
  =
  \begin{pmatrix}0\\2\end{pmatrix},
  \qquad
  2\I_2-\mat{B}
  =
  \begin{pmatrix}
    4&0\\
    2&8
  \end{pmatrix}.
\]
Solving gives
\[
  \vect{z}
  =
  \begin{pmatrix}0\\1/4\end{pmatrix}.
\]
Using \cref{eq:reconstruct-shifted-solve},
\[
  \vect{x}
  =\frac12\vect{b}
   +\frac12\mat{T}\mat{S}\vect{z}
  =
  \begin{pmatrix}
    0\\1/8\\1/8
  \end{pmatrix}.
\]
A direct calculation confirms
\((2\I_3-\mat{A})\vect{x}=\vect{b}\).

\subsection*{Solution to \Cref{ex:verify-ode-example}}

With \(a=2\) and \(b=-1\),
\[
  \vect{y}(0)=
  \begin{pmatrix}2\\-1\end{pmatrix}.
\]
\Cref{eq:ode-example-y2-sol,eq:ode-example-y1-sol} give
\[
  y_2(t)=-\e^{-2t},
  \qquad
  y_1(t)=\e^{-t}+\e^{-2t}.
\]
Hence
\[
  \vect{x}(t)
  =
  \begin{pmatrix}
    \e^{-t}+\e^{-2t}\\
    -\e^{-2t}\\
    \e^{-t}
  \end{pmatrix}.
\]
At \(t=0\), this is \((2,-1,1)^{\mathsf T}\), as required. Every component tends to zero as \(t\to\infty\).

\subsection*{Solution to \Cref{ex:shifted-stability-exercise}}

The largest real part of an eigenvalue of \(\mat{B}\) is \(1\). By
\cref{cor:stability-criterion}, asymptotic stability holds exactly when
\[
  \gamma>1.
\]

\subsection*{Hint for \Cref{ex:projection-invariance}}

Part (a) follows from \cref{thm:three-factor} with
\(\mat{S}=\mat{L}_r\) and \(\mat{W}\mat{T}=\I_2\). For part (b), choose a trial space that is not invariant under \(\mat{L}\). For part (c), use
\(\mat{L}\mat{T}=\mat{T}\mat{L}_r\).

\section{A compact reference sheet}
\label{app:reference}

For
\[
  \mat{A}=\mat{T}\mat{S}\mat{W},
  \qquad
  \mat{B}=(\mat{W}\mat{T})\mat{S},
  \qquad
  \mat{C}=\mat{S}(\mat{W}\mat{T}),
\]
the main identities are:

\begin{align}
  \det(\lambda\I_m-\mat{A})
  &=\lambda^{m-n}\det(\lambda\I_n-\mat{B})
  \notag\\
  &=\lambda^{m-n}\det(\lambda\I_n-\mat{C}),
  \label{eq:reference-char}\\[4pt]
  \mat{A}\mat{T}&=\mat{T}\mat{C},
  \label{eq:reference-AT}\\
  \mat{W}\mat{A}&=\mat{B}\mat{W},
  \label{eq:reference-WA}\\
  \mat{A}^k&=\mat{T}\mat{S}\mat{B}^{k-1}\mat{W},
  \qquad k\geq1,
  \label{eq:reference-powers}\\
  (\mu\I_m-\mat{A})^{-1}
  &=\frac1\mu\I_m
   +\frac1\mu\mat{T}\mat{S}
    (\mu\I_n-\mat{B})^{-1}\mat{W},
  \label{eq:reference-resolvent}\\
  \vect{q}=\mat{W}\vect{x},
  \quad \vect{x}'=\mat{A}\vect{x}
  &\Longrightarrow
  \vect{q}'=\mat{B}\vect{q},
  \label{eq:reference-q}\\
  \vect{x}=\mat{T}\vect{y},
  \quad \vect{x}'=\mat{A}\vect{x}
  &\Longrightarrow
  \vect{y}'=\mat{C}\vect{y},
  \label{eq:reference-y}
\end{align}
where the last implication assumes \(\mat{T}\) has full column rank and the trajectory remains in \(\range(\mat{T})\).

No identity of this kind holds for singular values. In the one important exception, when the columns of \(\mat{T}\) are orthonormal and \(\mat{W}=\mat{T}^{*}\), the nonzero singular values of \(\mat{A}\) are exactly those of \(\mat{S}\). Without that hypothesis the singular values of \(\mat{A}\), \(\mat{B}\), and \(\mat{C}\) can differ, and so can their condition numbers. See \cref{sec:singular-values}.

\section*{Funding and competing interests}

This work received no external funding. The author declares no competing interests.

\printbibliography

\end{document}